\title[Small values of signed harmonic sums]{Small values of signed harmonic sums}
\keywords{Egyptian fractions; harmonic numbers; harmonic sums}
\subjclass[2010]{Primary 11D75, Secondary 11B99}
\author[S.~Bettin]{Sandro Bettin}
\address{
  Dipartimento di Matematica\\
  Universit\`{a} di Genova\\
  Via Dodecaneso 35\\
  16146 Genova,~Italy}
\email{bettin@dima.unige.it}
\author[G.~Molteni]{Giuseppe Molteni}
\address{
  Dipartimento di Matematica\\
  Universit\`{a} di Milano\\
  Via Saldini 50\\
  20133 Milano,~Italy}
\email{giuseppe.molteni1@unimi.it}
\author[C.~Sanna]{Carlo Sanna}
\address{
  Dipartimento di Matematica\\
  Universit\`{a} di Torino\\
  Via Carlo Alberto 10\\
  10123 Torino,~Italy}
\email{carlo.sanna.dev@gmail.com}
\newtheorem{thm}{Theorem}[section]
\newtheorem{lem}[thm]{Lemma}
\newtheorem{pro}[thm]{Proposition}
\newtheorem{corol}[thm]{Corollary}
\theoremstyle{definition}
\newtheorem{remark}[thm]{Remark}
\newtheorem*{acknowledgements}{Acknowledgements}
\newcommand{\N}{\mathbb{N}}
\newcommand{\Z}{\mathbb{Z}}
\newcommand{\R}{\mathbb{R}}
\newcommand{\m}{\mathfrak{m}}
\newcommand{\s}{\mathfrak{s}}
\renewcommand{\SS}{\mathfrak{S}}
\newcommand{\eps}{\varepsilon}
\newcommand{\np}{64}
\DeclareMathOperator{\lcm}{lcm}
\newcommand{\intpart}[1]{\left\lfloor#1\right\rfloor}
\numberwithin{equation}{section}
\begin{document}
\begin{abstract}
For every $\tau\in\R$ and every integer $N$, let $\m_N(\tau)$ be the minimum of the distance of $\tau$
from the sums $\sum_{n=1}^N s_n / n$, where $s_1, \ldots, s_n \in \{-1, +1\}$. We prove that $\m_N(\tau)
< \exp\!\big(-C(\log N)^2\big)$, for all sufficiently large positive integers $N$ (depending on $C$ and
$\tau$), where $C$ is any positive constant less than $1/\log 4$.
\end{abstract}

\maketitle

\begin{center}
To appear in C. R. Math. Acad. Sci. Paris 2018.\\
\end{center}

\section{Introduction}
For each positive integer $n$, let
\begin{equation*}
H_n := 1 + \frac{1}{2} + \frac{1}{3} + \cdots + \frac{1}{n}
\end{equation*}
be the $n$th \emph{harmonic number}. Harmonic numbers have long been an active area of research. For
instance, Wolstenholme~\cite{Wol62} proved that for any prime number $p \geq 5$ the numerator of $H_{p -
1}$ is divisible by $p^2$; while Taeisinger~\cite[p.~3115]{Wei02} showed that $H_n$ is never an integer
for $n > 1$. This latter result has been generalized by Erd\H{o}s~\cite{Erd32} to sums of inverses of
numbers in arithmetic progression. Also, the $p$-adic valuation of $H_n$ has been studied by
Boyd~\cite{MR1341721}, Eswarathasan and Levine~\cite{MR1129989}, Wu and Chen~\cite{MR3608179}, and
Sanna~\cite{MR3486261}. Moreover, harmonic numbers are special cases of \emph{Egyptian fractions}
(rational numbers which are sums of distinct unit fractions), themselves an active area of
research~\cite[\S{D11}]{MR2076335}.

It is well known that $H_n \to +\infty$ as $n \to +\infty$. More precisely,
\begin{equation}\label{equ:harmonic}
H_n = \log n + \gamma + O\!\left(1/{n}\right)
\end{equation}
for all positive integers $n$, where $\gamma$ is the \emph{Euler--Mascheroni constant}.
On the other hand, the alternating signs harmonic number
\begin{equation*}
H_n^\prime = 1 - \frac{1}{2} + \frac{1}{3} - \cdots + \frac{(-1)^{n + 1}}{n}
\end{equation*}
converges to $\log 2$ as $n \to +\infty$. Building on earlier work by Morrison~\cite{Morrison2,Mor},
Schmuland~\cite{MR2040884} proved that the random harmonic series
\begin{equation*}
X:=\sum_{n \,=\, 1}^\infty \frac{s_n}{n} ,
\end{equation*}
where $s_1, s_2, \ldots$ are independent uniformly distributed random variables in $\{-1,+1\}$, converges
almost surely to a random variable with smooth density function $g$ supported on the whole real line.
Interestingly, $g(0)$ and $g(2)$ are extremely close to, but slightly smaller than, $\frac{1}{4}$ and
$\frac{1}{8}$ respectively (the error being of the order of $10^{-6}$ and $10^{-43}$ respectively).
We refer to~\cite[p.~101]{MR2051473} and~\cite{MR2040884} for some more information on these constants and
to~\cite{Crandall,Offner} for more information on the random variable $X$.

\begin{figure}[h]
\centering
\includegraphics[width=0.50\textwidth]{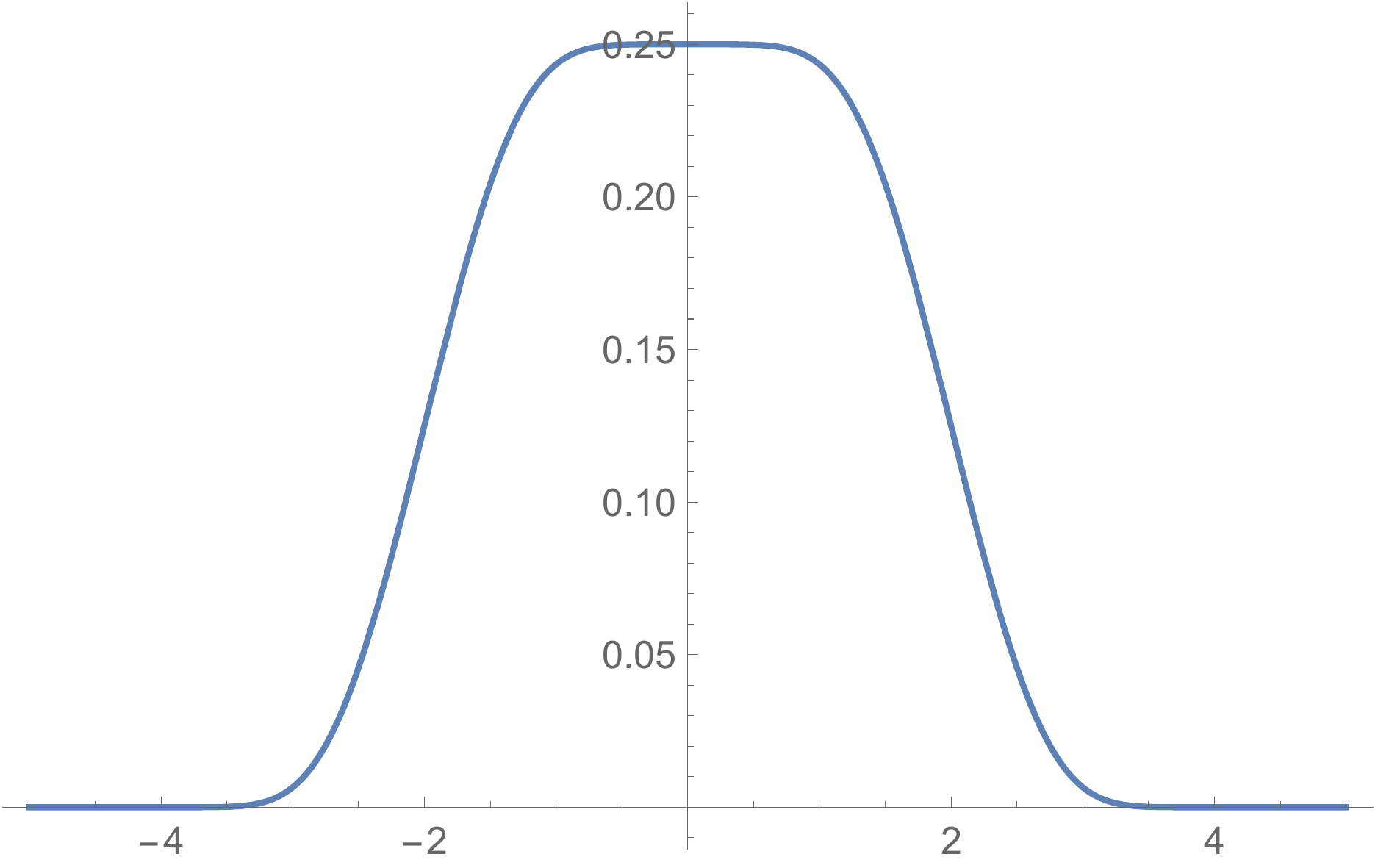}
\caption{The graph of the distribution function $g(x)$ of $X$.}
\label{Fig:1}
\end{figure}

In this paper we are interested in the set
\begin{equation*}
\SS_N := \left\{\sum_{n \,=\, 1}^N \frac{s_n}{n} : s_1, \ldots, s_N \in \{-1,+1\}\right\} .
\end{equation*}
Clearly, $\SS_N$ is symmetric respect to the origin and
\begin{equation*}
\max \SS_N = H_N \sim \log N
\end{equation*}
as $N \to +\infty$, by~\eqref{equ:harmonic}.
On the other hand, the quantity
\begin{equation*}
\m_N := \min\left\{|\s| : \s \in \SS_N\right\}
\end{equation*}
is much more mysterious. It is not difficult to prove (see Proposition~\ref{pro:nonzero} below) that
$\m_N\neq 0$ for all $N\in\N$. In particular, estimating the least common multiple of the denominators
using the Prime Number Theorem, one easily obtains the following lower bound for $\m_N$,
\begin{equation}\label{equ:simpleintro}
\m_N > \exp\!\left(-N + o(N) \right),
\end{equation}
as $N \to +\infty$.

More generally, we shall study the function
\begin{equation*}
\m_N(\tau):= \min\left\{|\s-\tau|\colon \s \in \SS_N\right\}, \qquad\tau\in\R.
\end{equation*}
Using an easy argument, in Proposition~\ref{pro:lower} below we show that for almost every $\tau$,
\begin{equation}\label{equ:A3}
\m_N(\tau) > \exp\!\left(-0.665\,N + o(N) \right) ,
\end{equation}
as $N \to +\infty$ (notice that $0.665<\log 2= 0.693\ldots$). This bound holds for almost every $\tau$,
but not for all of them: in fact, $\m_N(\tau)$ can be arbitrary small infinitely often. Precisely, given
any $f \colon \N \to \R_{>0}$ we can construct $\tau_f \in \R$ such that $\m_N(\tau_f) < f(N)$ for
infinitely many $N$ (see~\cite[Proposition~5.9]{BettinMolteniSanna2}). The bound in~\eqref{equ:A3} is not
optimal, and some minor variations of our proof are already able to produce some small improvement.

In this paper we are mainly interested in the opposite direction where the upper bound for $\m_N(\tau)$
is sought. Our main result is the following.
\begin{thm}\label{thm:upper}
For every $\tau\in \R$ and for any positive constant $C$ less than $1/\log 4$, we have
\begin{equation}\label{equ:upbound}
\m_N(\tau) < \exp\!\left(-C (\log N)^2\right) ,
\end{equation}
for all sufficiently large $N$, depending on $C$ and $\tau$.
\end{thm}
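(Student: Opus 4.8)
The plan is to build, for a given target $\tau$, a signed harmonic sum in $\SS_N$ that approximates $\tau$ to within $\exp(-C(\log N)^2)$ by a greedy/iterative construction that works in stages, each stage using a block of consecutive integers to shrink the error by a roughly constant multiplicative factor. Concretely, suppose at the start of a stage we have chosen signs $s_1,\dots,s_m$ and the current partial sum $\s_m=\sum_{n\le m}s_n/n$ satisfies $|\s_m-\tau|\le\delta$. We want to use the next block of indices, say $m<n\le M$, to reduce the error to roughly $\delta/2$ (or any fixed fraction), and the key question is how large $M$ must be in terms of $m$ and $\delta$. The tails $\sum_{m<n\le M}s_n/n$ range over a set that, by the argument behind Proposition~\ref{pro:nonzero}, is $\delta'$-dense in an interval of length about $2\log(M/m)$ where $\delta'$ is comparable to $1/M$ (the density being governed by the last term $1/M$ together with the combinatorial flexibility of the block). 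So to correct an error of size $\delta$ we need $\log(M/m)\gtrsim\delta$ — cheap once $\delta$ is small — but to push the new error below $\delta/2$ we need the mesh $1/M$ of the reachable tail-sums to be $\lesssim\delta/2$, i.e. $M\gtrsim 2/\delta$. Thus each stage at worst multiplies $m$ by a factor on the order of $1/\delta$.

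Iterating: after $k$ stages the error is $\le\delta_0 2^{-k}$ for some fixed initial $\delta_0$, so to reach error $\eps$ we take $k\approx\log_2(1/\eps)=\log(1/\eps)/\log 2$ stages. Tracking $m$ through the stages, $m_k\lesssim \prod_{j<k}(2/\delta_j)$ with $\delta_j=\delta_0 2^{-j}$, so $\log m_k\lesssim \sum_{j<k}\log(2/\delta_j)\lesssim \sum_{j<k} j\log 2\approx \tfrac12 k^2\log 2$. Setting $N=m_k$ and $\eps=\m_N(\tau)$ gives $\log N\lesssim \tfrac12(\log 2)\big(\log(1/\eps)/\log 2\big)^2=\tfrac{(\log(1/\eps))^2}{2\log 2}$, i.e. $\log(1/\eps)\gtrsim \sqrt{2\log 2}\,\sqrt{\log N}$ — which is only of order $\sqrt{\log N}$, far weaker than the claimed $(\log N)^2$. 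So the naive "halve the error each stage" bookkeeping is too lossy, and the real content of the theorem is that one can be far more aggressive: at each stage, rather than halving, one should reduce the error all the way down to the mesh size $\sim 1/M$, so that $\delta_{j+1}\asymp 1/m_{j+1}$ and each new block need only have length $m_{j+1}/m_j$ a fixed power (or even just a bounded multiple in a suitable parametrization), making $m_j$ grow only like $c^{j^2}$ after a careful choice of the growth rate. Balancing $N=m_k\approx 4^{k^2/\,?}$ against $\eps\approx 1/m_k$... the sharp constant $1/\log 4$ must come from choosing the stagewise growth $m_{j+1}/m_j$ optimally so that the product of the block lengths, which controls $N$, is minimized for a given final mesh; the factor $4$ presumably enters because each block of length $L$ starting near $m$ contributes reachable sums with mesh about $1/(mL)$ while costing a factor $L$ in $N$, and the two-sided nature of the signs effectively squares something, producing $\log 4=2\log 2$.

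The main obstacle, therefore, is \emph{not} setting up the iteration but proving the quantitative density statement for a single block with the correct dependence — namely that the set of tail-sums $\big\{\sum_{m<n\le M}s_n/n\big\}$ meets every subinterval of $(-\log(M/m),\log(M/m))$ of length $\gg 1/(mM)$ or so, not merely $\gg 1/M$. This is where the structure of $\SS_N$ beyond the trivial "last term sets the mesh" heuristic must be exploited: one wants that the partial sums over a block are dense at scale finer than $1/M$, which should follow from iterating the elementary "one more term" step within the block and carefully controlling how the reachable set fills in (a continued-fraction-type or telescoping argument, using that $1/n - 1/(n+1)$ and similar differences are small and highly composite in the relevant sense). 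Once the single-block lemma is in hand with the right exponent, the iteration and the extraction of the constant $1/\log 4$ are a bookkeeping optimization of the kind sketched above, and the "for all sufficiently large $N$ depending on $C$ and $\tau$" clause absorbs the initial segment needed to bring a general $\tau$ into the regime where $\m_m(\tau)$ is already small enough to start the stages. I would also need to handle $\tau$ outside $[-\log N,\log N]$ trivially (no approximation possible, but then $\m_N(\tau)$ is large and the bound is about approximating \emph{after} $N$ is large enough that $\tau\in\SS_N$'s range), and to check the construction respects that signs are chosen once and never revised.
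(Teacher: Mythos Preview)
Your approach is constructive (a block-greedy iteration), whereas the paper's is probabilistic: it shows, via the characteristic function $\varrho_N(x)=\prod_{n\le N}\cos(\pi x/n)$, that the random sum $X_N=\sum_{n\le N}s_n/n$ equidistributes down to scale $\exp(-C(\log N)^2)$, and since the limiting density $g$ is everywhere positive, every interval of that length around any $\tau$ contains a signed sum. The technical core is the bound $|\varrho_N(x)|\le\exp\!\big(-B\exp(E\sqrt{\log x})\big)$ for $x\in[1,\exp(C(\log N)^2)]$, obtained by lower-bounding $\#\{n\le N:\|x/n\|\ge\delta\}$.

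The gap is precisely the one you flag yourself: the ``single-block density lemma'' is not proved, and the hints you offer (``continued-fraction-type or telescoping'') are not a plan. Worse, in the strength you need, the lemma \emph{is} the theorem. Run the bookkeeping honestly: with block mesh $\asymp 1/M$ the iteration only recovers the trivial $\m_N(\tau)\ll 1/N$; with mesh $\asymp 1/(mM)$ and doubling blocks you get $\m_N(\tau)\ll N^{-2}$; to reach $\exp(-C(\log N)^2)$ the block sums must already be dense at that very scale, which is exactly what Theorem~\ref{thm:upper} asserts for the block $\{1,\dots,N\}$. (Your intermediate $\sqrt{\log N}$ computation is also suspect: the product $\prod_{j<k}(2/\delta_j)$ does not arise from the constraint $1/m_{j+1}\lesssim\delta_j/2$, which gives $m_{j+1}\asymp 1/\delta_j$, not a multiplicative recursion.) Your guess that $\log 4$ enters because ``two-sided signs square something'' is not the mechanism either: in the paper it comes from an arithmetic construction passing from $\|x/n\|$ to $\|x(k-1)!/n^k\|$ via the identity $\sum_j(-1)^j\binom{k-1}{j}(n+j)^{-1}=(k-1)!\big/\prod_{j=0}^{k-1}(n+j)$, with the threshold $x\ge 4^{k^2}$ and $k\approx\sqrt{C\log x}$ producing the constant. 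The paper does remark that the term-by-term greedy sequence attains the same $1/\log 4$ along a subsequence, but only for almost every $\tau$, and defers that analysis to a companion paper; for the every-$\tau$ statement here, the Fourier route is what is used.
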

Notice that a sequence of signs $s_{1},\dots,s_{N}$ realizing the minimum in the definition of
$\m_{N}(\tau)$ does not come from a ``universal'' infinite sequence $(s_n)_{n\geq1}$ such that, setting
$\sigma_N :=\sum_{n\,=\,1}^N s_n / n$, we have $\m_N(\tau) = |\sigma_N - \tau|$ for all $N$. Indeed,
$|\sigma_N-\sigma_{N-1}|=1/N$ and so $\sigma_N$ and $\sigma_{N-1}$ cannot both be less then $1/(2N)$ away
from $\tau$.

The upper and lower bounds given in the inequalities~\eqref{equ:simpleintro} and~\eqref{equ:upbound} are
quite distant and thus they do not indicate clearly what is the real size of $\m_N$. A heuristic argument
suggests that the inequality $\m_N > \exp\!\left(-\frac{1}{2}N + o(N)\right)$ is satisfied for infinitely
many $N$, and numerical computations (cf.~Figure~\ref{Fig:2}) might suggest that actually $\m_N =
\exp\!\left(-\frac{1}{2}N + o(N)\right)$. However, because of the exponential nature of the problem, we
were able to compute only the first $\np$ values of $\m_N$, which are clearly not enough to draw a solid
conclusion. We shall give these values of $\m_N$ in the appendix. Despite the limited amount on data at
disposal, some interesting observations can be drawn from them. For example, $\m_N$ is not a decreasing
function of $N$ and there are several repeated values. One can then perhaps expect that there are
infinitely many values of $N$ such that $\m_N=\m_{N+1}$ or even such that $\m_N=\cdots=\m_{N+k}$ for any
fixed $k\in\N$.

\begin{figure}[h]
\centering
\includegraphics[width=0.60\textwidth]{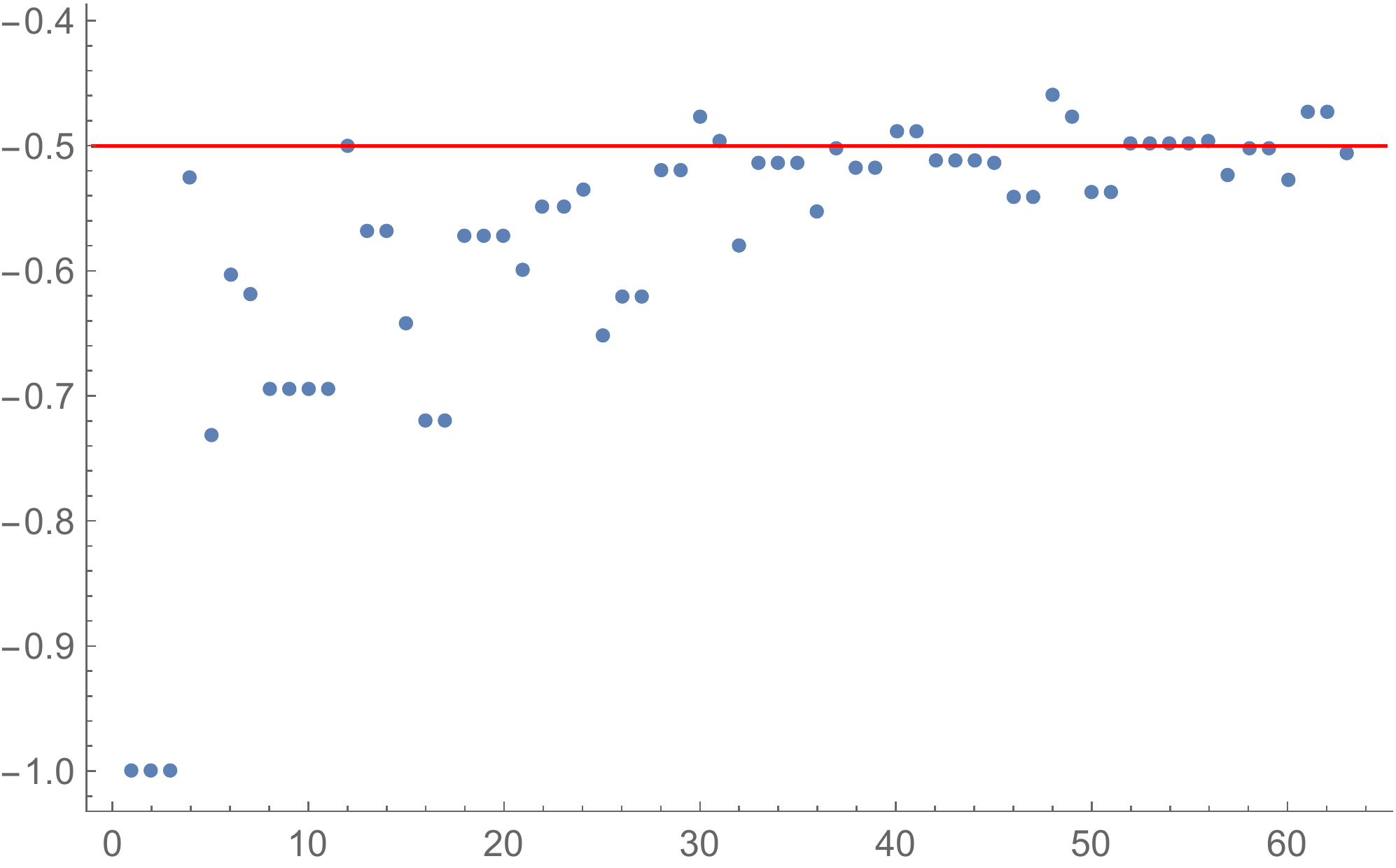}
\caption{The first $\np$ values of $\frac{\log \m_N}{\log L_N}$, where $L_N= \lcm\{1, \ldots,
N\}=e^{N+o(N)}$, plotted against the constant line $-\frac{1}{2}$.} \label{Fig:2}
\end{figure}

We prove Theorem~\ref{thm:upper} using a probabilistic argument. More precisely, in
Theorem~\ref{thm:loclimthm} below we shall prove a small scale distribution result for
$X_N:=\sum_{n\,=\,1}^N \frac{s_n}{n}$,  where $s_1,\dots,s_N$ are independently uniformly distributed
random variables in $\{-1, +1\}$. Theorem~\ref{thm:upper} will follow immediately from this result
(cf.~Corollary~\ref{cor:maincorollary}). Interestingly, this distribution problem for $X_N$ will lead us
to another classical number theoretic problem: that of bounding a short average of the number of divisors
in a prescribed small interval. We will attack this problem in two different ways, first using Rankin's
trick together with a bound for the divisor function $\sigma_s(n)$ proved in Ramanujan's lost
notebook~\cite{MR1606180}, and then using a more complicated arithmetic construction. Surprisingly, the
two methods both lead to the same bound~\eqref{equ:upbound}, albeit with different constants.

While the probabilistic approach has the advantage of showing the existence of several $N$-tuples of
signs $s_1, \ldots, s_N$ giving small values for $|\sigma_N - \tau|$, this approach does
not produce any explicit instance of these $N$-tuples. If one is interested in exhibiting explicit
sequences, then one can construct some special signed harmonic series converging to $\tau$ and estimate
the absolute value of their partial sums. A natural candidate is the ``greedy'' sequence obtained by
setting $s_{N+1} := +1$ if $\sigma_N \leq \tau$, and $s_{N+1} := -1$ otherwise. It is clear that $\sigma_N$
converges to $\tau$, since at each step one chooses the sign which makes $\sigma_N$ closer to $\tau$ and more
precisely one has $|\sigma_N-\tau| \leq 1/N$ for all $N$ large enough (depending on $\tau$). On the other
hand, as observed above, $\sigma_N$ cannot be always very close to $\tau$ and in fact the inequality
$|\sigma_N - \tau| \geq 1/(N+1)$ is satisfied infinitely often. However, it is still possible to prove that for any $A
> 0$ one has $|\sigma_N - \tau|\ll_A N^{-A}$ for infinitely many positive integers $N$.
In fact we can show that for almost all $\tau$ one has
\begin{equation*}
\liminf_{n \to +\infty} \frac{\log|\sigma_N - \tau|}{(\log N)^2} = -\frac1{\log 4} .
\end{equation*}
It is quite remarkable that this ``greedy'' algorithm and the probabilistic method developed in this
paper both give a decay rate of $\exp\!\big(-\big(\tfrac1{\log 4} + o(1)\big) (\log N)^2\big)$. The study
of this ``greedy'' sequences needs completely different tools from those employed here, thus we leave its
study to another paper~\cite{BettinMolteniSanna2}.

\begin{acknowledgements}
S.~Bettin is member of the INdAM group GNAMPA.
G.~Molteni and C.~Sanna are members of the INdAM group GNSAGA.
The work of the first and second author is partially supported by PRIN 2015 ``Number Theory and Arithmetic Geometry''.
The authors would also like to thank D.~Koukoulopoulos and M.~Radziwi\l\l{} for several useful discussions.
The authors would also like to thank the anonymous referee for carefully reading the paper.
\end{acknowledgements}

\subsection*{Notation}
We employ the Landau--Bachmann ``Big Oh'' and ``little oh'' notations $O$ and $o$, as well as the
associated Vinogradov symbols $\ll$ and $\gg$, with their usual meanings. Any dependence of the implied
constants is explicitly stated or indicated with subscripts. As usual, we write $\mathbb{E}[X]$ for the
expected valued of a random variable $X$, and $\mathbb{P}[E]$ for the probability of an event $E$. Also,
we indicate with $\mathcal{C}_c(\R)$ the space of continuous functions with compact support on $\R$ and
with $\mathcal{C}^\infty_c(\R)$ the subspace of  $\mathcal{C}_c(\R)$ consisting of smooth functions.
Finally, for each $\Phi \in \mathcal{C}_c(\R)$ we let $\widehat{\Phi}$ denote its Fourier transform, here
defined by
\begin{equation*}
\widehat{\Phi}(x) := \int_{\R} \Phi(y) e^{-2\pi ixy} \,\mathrm{d} y
\end{equation*}
for all $x \in \R$.

\section{The small scale distribution of $X_N$}
We start with stating our result on the small scale distribution of $X_N$. We remind that $X_N$ is the
random variable defined by $X_N:=\sum_{n\,=\,1}^N s_n / n$, where $s_n$ are taken uniformly and
independently at random in $\{-1,+1\}$.
\begin{thm}\label{thm:loclimthm}
Let $C$ be any positive constant less than $1/\log 4$.
Then, for all intervals $I\subseteq\R$ of length
$|I|>\exp\!\left(-C (\log N)^2\right)$ one has
\begin{equation*}
\mathbb{P}[X_N\in I] = \int_I g(x) \, \mathrm{d} x+o(|I|),
\end{equation*}
as $N\to\infty$, where
\begin{equation*}
g(x):=2\int_{0}^{+\infty}\cos(2\pi u x) \prod_{n=1}^{+\infty}\cos(2\pi u/n)\,\mathrm{d} u.
\end{equation*}
\end{thm}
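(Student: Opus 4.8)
The plan is to prove Theorem~\ref{thm:loclimthm} via Fourier analysis, comparing the characteristic function of $X_N$ with the Fourier transform of the limiting density $g$. Writing $\varphi_N(u) := \mathbb{E}[e^{2\pi i u X_N}] = \prod_{n=1}^N \cos(2\pi u/n)$ and noting that $\widehat{g}(u) = \prod_{n=1}^\infty \cos(2\pi u/n)$, the starting point is the smoothed count: for a suitable bump function $\Phi$ one has, by Fourier inversion, $\mathbb{E}[\Phi(X_N)] - \int_\R \Phi(x) g(x)\,\mathrm{d}x = \int_\R \widehat{\Phi}(-u)\big(\varphi_N(u) - \widehat{g}(u)\big)\,\mathrm{d}u$. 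To detect an interval $I$ of length $\delta := |I| > \exp(-C(\log N)^2)$, I would sandwich its indicator between smooth functions $\Phi^\pm$ that agree with $\mathbf{1}_I$ except on margins of width $\eta\delta$ for a small parameter $\eta = \eta(N) \to 0$, so that the main terms match $\int_I g$ up to $o(\delta)$, while $\widehat{\Phi^\pm}(u)$ decays rapidly — faster than any power of $(\eta\delta|u|)^{-1}$ — for $|u| \gg (\eta\delta)^{-1}$. The task then reduces to bounding $\int_\R |\widehat{\Phi^\pm}(u)|\,|\varphi_N(u) - \widehat{g}(u)|\,\mathrm{d}u = o(\delta)$.

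The integral splits naturally into three ranges. For small $u$, say $|u| \leq U_0$ for a fixed large constant $U_0$, both $\varphi_N(u)$ and $\widehat{g}(u)$ are close: $|\varphi_N(u) - \widehat{g}(u)| \leq \prod_{n\leq N}|\cos(2\pi u/n)| \cdot \big|1 - \prod_{n>N}\cos(2\pi u/n)\big| \ll_{U_0} 1/N$, which contributes $O(\delta/N) = o(\delta)$. For large $u$, $|u| \geq (\eta\delta)^{-2}$ say, the rapid decay of $\widehat{\Phi^\pm}$ — which can be made to beat $|u|^{-2}$ since $\Phi^\pm$ can be taken smooth with controlled derivatives — dominates the trivial bound $|\varphi_N(u) - \widehat{g}(u)| \leq 2$, again giving $o(\delta)$. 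The crux is the intermediate range $U_0 \leq |u| \leq (\eta\delta)^{-2}$, where one needs a genuine cancellation estimate showing $\varphi_N(u)$ is small.

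Here the key input is that $\prod_{n=1}^N |\cos(2\pi u/n)|$ is small for $u$ away from integers, and more precisely one controls $\int_{U_0}^{T} |\varphi_N(u)|\,\mathrm{d}u$ with $T = (\eta\delta)^{-2} = \exp(O(\log N)^2)$. Taking logarithms, $\log|\varphi_N(u)| = \sum_{n\leq N} \log|\cos(2\pi u/n)|$, and the terms with $n$ close to a divisor-type resonance (i.e.\ $u/n$ near a half-integer) are the dangerous ones; the size of $\int |\varphi_N|$ is governed by how often, for $u$ in a dyadic block, many $n\leq N$ simultaneously have $u/n$ near $\tfrac12\mathbb{Z}$. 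This is exactly the point flagged in the introduction: bounding a short average of the count of divisors (or near-divisors) of integers $\approx 2u$ lying in a prescribed short interval. I would invoke the divisor-function bound attributed to Ramanujan together with Rankin's trick: for the relevant range one shows $\int_{U_0}^{\exp(c(\log N)^2)} |\varphi_N(u)|\,\mathrm{d}u = o\big(\exp(-C(\log N)^2)\big)$ precisely when $c < C < 1/\log 4$, the constant $\log 4$ arising from the $\cos(2\pi u/1)\cos(2\pi u/2) = \cos(2\pi u)\cos(\pi u)$ structure and the bound $\sigma_s(n) \ll n^s \exp(O(\log n/\log\log n))$ optimized over $s$.

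The main obstacle is this intermediate-range estimate: one must show that the set of $u$ for which $\prod_{n\leq N}|\cos(2\pi u/n)|$ fails to be tiny has small measure, and quantify this well enough to survive multiplication by the (polynomially bounded, since $|\widehat{\Phi^\pm}(u)| \ll 1$ on this range) Fourier weight and integration over an interval of length $\exp(O(\log N)^2)$. The threshold $1/\log 4$ is sharp for this method and forces the condition $C < 1/\log 4$ in the statement; pushing the analysis to extract the correct power of $\log N$ from the divisor average — rather than a wasteful power of $N$ — is the technical heart, and is where the connection to Ramanujan's bound for $\sigma_s(n)$ and Rankin's trick enters decisively. Once this is in hand, assembling the three ranges and letting $\eta \to 0$ slowly (e.g.\ $\eta = 1/\log N$) yields $\mathbb{E}[\Phi^\pm(X_N)] = \int_I g + o(\delta)$, and squeezing gives $\mathbb{P}[X_N \in I] = \int_I g(x)\,\mathrm{d}x + o(|I|)$ as claimed.
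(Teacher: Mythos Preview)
Your overall framework---Fourier inversion, sandwiching $\mathbf{1}_I$ between smooth approximations $\Phi^\pm$, and splitting the resulting integral into small, intermediate, and large frequency ranges---matches the paper's proof exactly. The small range (where $\varphi_N \approx \widehat{g}$ via the tail estimate $\prod_{n>N}\cos(2\pi u/n) = 1 + O(u^2/N)$) and the large range (where $\widehat{\Phi^\pm}$ decays rapidly) are handled just as you describe.

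The genuine gap is in the intermediate range. You assert that Rankin's trick together with Ramanujan's bound for $\sigma_s$ yields the constant $1/\log 4$, but the paper explicitly says otherwise: that approach, when optimized, produces only the much smaller constant $C' = (4e)^{-2} + o(1)$ (see Proposition~\ref{pro:secbound} and the paragraph introducing Section~\ref{sec:shortaverage}). To reach every $C < 1/\log 4$ the paper uses a separate arithmetic construction (Section~\ref{sec:arithmetic}): one first bounds from below the auxiliary sets $\mathcal{S}_k(N,\delta,x) = \{n\leq N : \|x/n^k\| \geq \delta\}$ for general $k$ (Lemma~\ref{lem:SkNdxbound}), and then transfers this to a lower bound for $\mathcal{S}_1$ via the finite-difference identity $\sum_{j=0}^{m}(-1)^j\binom{m}{j}(x+j)^{-1} = m!/(x(x+1)\cdots(x+m))$ (Lemmas~\ref{lem:identity}--\ref{lem:S1Ndxbound}). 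The constant $1/\log 4$ emerges from choosing $k \approx \sqrt{(\log x)/\log 4}$ so that the constraint $x \geq 4^{k^2}$ is just met; it has nothing to do with the $\cos(2\pi u)\cos(\pi u)$ structure you mention. As written, your proposal would prove the theorem for some positive $C$, but not for every $C < 1/\log 4$.

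A minor inefficiency worth noting: in the intermediate range you use only $|\widehat{\Phi^\pm}(u)| \ll 1$ and then demand $\int|\varphi_N| = o(\delta)$. The paper instead uses $\|\widehat{\Phi^\pm}\|_\infty \leq \|\Phi^\pm\|_1 = O(|I|)$, so that one needs only $\int_{N^\eps}^{M_N}|\varrho_N| = o(1)$; this follows directly from the \emph{pointwise} bound $|\varrho_N(x)| \leq \exp(-B\exp(E\sqrt{\log x}))$ of Lemma~\ref{lem:rhoN} and spares you the unnecessarily strong integrated estimate you stated.
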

\begin{remark}\label{rem:distribfcn}
As shown by Schmuland~\cite{MR2040884}, $g(x)$ is a smooth strictly positive function which is
$O_A\!\left(x^{-A}\right)$ as $x \to \pm\infty$, for any $A > 0$.
\end{remark}
\begin{corol}\label{cor:maincorollary}
Let $C$ be any positive constant less than $1/\log 4$.
Then, for all $\tau\in\R$ one has
\begin{equation*}
\#\left\{(s_1, \ldots, s_N )\in \{-1,+1\}^N : \left|\tau - \sum_{n \,=\, 1}^N \frac{s_n}{n} \right| < \delta \right\}
\sim  2^{N+1} g(\tau) \delta(1 +o_{C,\tau}(1))
\end{equation*}
as $N\to\infty$ and $\delta\to 0$, uniformly in $\delta \geq \exp\!\left(-C (\log N)^2\right)$.
In particular, for all large enough $N$ one has $\m_N (\tau)<\exp\!\left(-C (\log N)^2\right)$.
\end{corol}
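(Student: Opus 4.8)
The plan is to derive Corollary~\ref{cor:maincorollary} directly from Theorem~\ref{thm:loclimthm} by a counting argument, and then obtain the claim about $\m_N(\tau)$ as an immediate consequence. First I would observe that since the signs $s_1,\dots,s_N$ are independent and uniform in $\{-1,+1\}$, each of the $2^N$ tuples is equally likely, so
\begin{equation*}
\#\left\{(s_1, \ldots, s_N )\in \{-1,+1\}^N : \Big|\tau - \sum_{n \,=\, 1}^N \tfrac{s_n}{n}\Big| < \delta \right\} = 2^N\,\mathbb{P}\big[X_N\in I\big],
\end{equation*}
where $I:=(\tau-\delta,\tau+\delta)$ has length $|I|=2\delta$. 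For $\delta\geq\exp(-C(\log N)^2)$ this interval satisfies the hypothesis of Theorem~\ref{thm:loclimthm}, hence $\mathbb{P}[X_N\in I]=\int_I g(x)\,\mathrm{d}x+o(|I|)$ as $N\to\infty$, uniformly in such $\delta$.

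Next I would control the integral $\int_I g(x)\,\mathrm{d}x$. Since $g$ is continuous (indeed smooth) by Remark~\ref{rem:distribfcn}, for $\tau$ fixed we have $g(x)=g(\tau)+o_\tau(1)$ as $x\to\tau$, uniformly on $I$ when $\delta\to 0$; therefore $\int_I g(x)\,\mathrm{d}x = 2\delta\, g(\tau)(1+o_\tau(1))$. Combining this with the error term $o(|I|)=o(2\delta)$ coming from Theorem~\ref{thm:loclimthm} and multiplying through by $2^N$ yields
\begin{equation*}
\#\left\{(s_1, \ldots, s_N )\in \{-1,+1\}^N : \Big|\tau - \sum_{n \,=\, 1}^N \tfrac{s_n}{n}\Big| < \delta \right\} = 2^{N+1}g(\tau)\,\delta\,(1+o_{C,\tau}(1)),
\end{equation*}
which is the asymptotic claimed, with the dependence on $C$ and $\tau$ tracked through the two error terms.

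Finally, for the last assertion I would simply note that $g(\tau)>0$ for every $\tau\in\R$ (again Remark~\ref{rem:distribfcn}), so taking $\delta=\exp(-C'(\log N)^2)$ for some fixed $C'$ strictly between $C$ and $1/\log 4$, the right-hand side above is $2^{N+1}g(\tau)\delta(1+o(1))$, which is strictly positive for all sufficiently large $N$ (depending on $C'$, hence on $C$, and on $\tau$). A positive count means there exists at least one tuple $(s_1,\dots,s_N)$ with $|\tau-\sigma_N|<\delta<\exp(-C(\log N)^2)$, i.e.\ $\m_N(\tau)<\exp(-C(\log N)^2)$ for all large enough $N$.

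The only genuine subtlety here is bookkeeping the uniformity: one must check that the $o(|I|)$ in Theorem~\ref{thm:loclimthm} is uniform over all admissible $\delta$ (which it is, by the statement of that theorem), and that the replacement of $\int_I g$ by $2\delta g(\tau)$ introduces an error that is $o(\delta)$ uniformly as $\delta\to 0$ with $\tau$ fixed — this follows from continuity of $g$ at $\tau$ and is the step where the $o_{C,\tau}(1)$ rather than a uniform-in-$\tau$ bound is needed. No deeper obstacle arises; the corollary is essentially a packaging of Theorem~\ref{thm:loclimthm}.
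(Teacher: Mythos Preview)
Your proof is correct and is exactly the approach the paper has in mind: the paper's own proof is the single sentence ``The result follows immediately from Theorem~\ref{thm:loclimthm} and Remark~\ref{rem:distribfcn},'' and you have simply spelled out the routine details of that deduction. The detour through an auxiliary constant $C'\in(C,1/\log 4)$ in the final paragraph is unnecessary (taking $\delta=\exp(-C(\log N)^2)$ directly already works, since the asymptotic applies at the boundary), but it does no harm.
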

\begin{proof}
The result follows immediately from Theorem~\ref{thm:loclimthm} and Remark~\ref{rem:distribfcn}.
\end{proof}

We now proceed to proving Theorem~\ref{thm:loclimthm}.
For each $N\in\N\cup\{\infty\}$ and for any real number $x$, define the product
\begin{equation*}
\varrho_N(x) := \prod_{n\,=\,1}^N \cos\!\Big(\frac{\pi x}{n}\Big)
\end{equation*}
and let $\varrho(x):=\varrho_\infty(x)$.
\begin{lem}\label{lem:expected}
We have
\begin{equation*}
\mathbb{E}[\Phi(X_N)] = \int_{\R} \widehat{\Phi}(x) \varrho_N(2x) \, \mathrm{d} x
\end{equation*}
for all $\Phi \in \mathcal{C}_c^1(\R)$.
\end{lem}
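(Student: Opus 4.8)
The plan is to compute $\mathbb{E}[\Phi(X_N)]$ by Fourier inversion. Since $\Phi\in\mathcal{C}_c^1(\R)$, its Fourier transform $\widehat\Phi$ is integrable (indeed $\widehat\Phi(x)\ll (1+|x|)^{-1}$ together with $\widehat\Phi\in L^1$ via the compact support of $\Phi$ and continuity of $\Phi'$), so the inversion formula $\Phi(y)=\int_\R\widehat\Phi(x)e^{2\pi i xy}\,\mathrm{d}x$ holds pointwise. First I would substitute this into the expectation:
\begin{equation*}
\mathbb{E}[\Phi(X_N)] = \mathbb{E}\Big[\int_\R \widehat\Phi(x)\, e^{2\pi i x X_N}\,\mathrm{d}x\Big].
\end{equation*}
The next step is to interchange the expectation (a finite average over the $2^N$ sign patterns) with the integral over $x$; this is justified by Fubini's theorem since $\int_\R |\widehat\Phi(x)|\,\mathrm{d}x<\infty$ and the integrand is bounded by $|\widehat\Phi(x)|$ uniformly in the signs. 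This yields
\begin{equation*}
\mathbb{E}[\Phi(X_N)] = \int_\R \widehat\Phi(x)\, \mathbb{E}\big[e^{2\pi i x X_N}\big]\,\mathrm{d}x.
\end{equation*}

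It then remains to identify the characteristic function $\mathbb{E}[e^{2\pi i x X_N}]$ with $\varrho_N(2x)$. Writing $X_N=\sum_{n=1}^N s_n/n$ with the $s_n$ independent, the expectation factors as a product:
\begin{equation*}
\mathbb{E}\big[e^{2\pi i x X_N}\big] = \prod_{n=1}^N \mathbb{E}\big[e^{2\pi i x s_n/n}\big] = \prod_{n=1}^N \frac{e^{2\pi i x/n}+e^{-2\pi i x/n}}{2} = \prod_{n=1}^N \cos\!\Big(\frac{2\pi x}{n}\Big),
\end{equation*}
using that each $s_n$ takes values $\pm1$ with probability $\tfrac12$. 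By the definition of $\varrho_N$, the last product is exactly $\varrho_N(2x)$, which completes the proof.

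I do not anticipate any real obstacle here: the only points requiring a word of care are the validity of Fourier inversion for $\Phi$ and the application of Fubini, both of which follow routinely from $\Phi\in\mathcal{C}_c^1(\R)$ (compact support plus one continuous derivative forces $\widehat\Phi\in L^1$). The factorization of the characteristic function is immediate from independence of the $s_n$. Thus the lemma follows by combining the three displayed identities.
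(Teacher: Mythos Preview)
Your proposal is correct and follows essentially the same route as the paper: Fourier inversion on $\Phi$, interchange of the (finite) expectation with the integral, and factorisation of $\mathbb{E}[e^{2\pi i x X_N}]$ into the product of cosines $\varrho_N(2x)$. The only cosmetic difference is that the paper writes out the average over the $2^N$ sign patterns explicitly rather than invoking the language of characteristic functions and independence; your added remarks on $\widehat\Phi\in L^1$ (needed for inversion and Fubini) are a welcome clarification that the paper leaves implicit.
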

\begin{proof}
By the definition of expected value and by using inverse Fourier transform, we get
\begin{align*}
\mathbb{E}[\Phi(X_N)]
&= \frac{1}{2^N} \sum_{s_1,\ldots,s_N\,\in\,\{-1,+1\}} \Phi\!\left(\sum_{n\,=\,1}^N \frac{s_n}{n}\right) \\
&= \frac{1}{2^N} \sum_{s_1, \ldots, s_N \in \{-1, +1\}}
   \int_{\R} \widehat{\Phi}(x) \exp\!\left(2\pi ix \sum_{n\,=\,1}^N \frac{s_n}{n}\right) \mathrm{d} x \\
&= \frac{1}{2^N}
   \int_{\R} \widehat{\Phi}(x) \sum_{s_1, \ldots, s_N \,\in\, \{-1, +1\}} \exp\!\left(2\pi ix \sum_{n\,=\,1}^N \frac{s_n}{n}\right) \mathrm{d} x \\
&= \int_{\R} \widehat{\Phi}(x) \varrho_N(2x) \,\mathrm{d} x,
\end{align*}
as desired.
\end{proof}
In the following lemma, whose proof we postpone to Section~\ref{sec:rho}, we collect some results on $\varrho_N$.
\begin{lem}\label{lem:rhoN}
For all $N\in\N$ and $x\in\big[0,\sqrt{N}\big]$ we have
\begin{equation}\label{equ:rho2}
\varrho_N(x) =\varrho(x) (1+O(x^2/N)).
\end{equation}
Moreover, there exist absolute constants $B,C,E> 0$ such that
\begin{equation}\label{equ:rho3}
|\varrho_N(x)| < \exp\!\left(-B \exp\!\left(E\sqrt{\log x}\right)\right)
\end{equation}
for all sufficiently large positive integers $N$ and for all $x \in \left[1, \exp\!\left(C(\log N)^2\right)\right]$.
In particular, $C$ can be taken as any positive real number less than $1/\log 4$.
\end{lem}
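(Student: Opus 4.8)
The plan is to prove Lemma~\ref{lem:rhoN} in two parts, treating the local comparison estimate~\eqref{equ:rho2} and the decay estimate~\eqref{equ:rho3} separately. For~\eqref{equ:rho2}, I would write $\varrho_N(x)/\varrho(x) = \prod_{n > N} \cos(\pi x/n)^{-1}$ and take logarithms: since $x \le \sqrt{N}$, for every $n > N$ we have $\pi x/n \le \pi x/N \le \pi/\sqrt{N}$, which is small, so $\cos(\pi x/n) = 1 + O(x^2/n^2)$ with the error uniformly bounded away from causing sign or vanishing issues. Then $\log(\varrho_N(x)/\varrho(x)) = -\sum_{n>N}\log\cos(\pi x/n) = \sum_{n>N} O(x^2/n^2) = O(x^2/N)$, and exponentiating gives the claimed factor $1 + O(x^2/N)$. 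This part is routine; the only care needed is checking that the tail product converges (which it does, being dominated by $\sum 1/n^2$) so that $\varrho(x)$ is well-defined and the manipulation is legitimate.

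The substantive part is~\eqref{equ:rho3}, the quantitative decay of $|\varrho_N(x)|$. The idea is that $|\cos(\pi x/n)|$ is small whenever $x/n$ is close to a half-integer, so I want to count, for a given $x$, how many $n$ in a suitable range force $|\cos(\pi x/n)|$ to be bounded away from $1$. Concretely, for $n$ in a dyadic-type range around $x$ — say $n \asymp x/k$ for small positive integers $k$ — the ratio $x/n$ traverses an interval of length $\asymp 1$, so a positive proportion of such $n$ have $\|x/n\| \gg 1$ distance... more precisely, I would fix a small $c>0$ and count integers $n \in (x/(k+c), x/(k-c))$ roughly, but the cleanest route is: for each integer $k$ with $1 \le k \le K$ (where $K$ is a parameter to be chosen, something like $K = \exp(E'\sqrt{\log x})$), the interval of $n$ for which $x/n \in (k - \tfrac14, k + \tfrac14)$ has length $\asymp x/k^2$, and on a fixed subinterval of it one has $|\cos(\pi x/n)| \le \cos(\pi/8) < 1$ (say). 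Summing the counts of such ``good'' $n$ over $k \le K$ gives $\gg x \sum_{k \le K} 1/k^2 \cdot (\text{something})$ — I need to be a bit more careful to get a count that grows, so instead I would count good $n$ in the single range $n \in (x/K, x)$: as $n$ ranges over this dyadic-ish block, $x/n$ sweeps through $\sim K$ unit intervals, and in each the fraction of $n$ with $|\cos(\pi x/n)|$ bounded away from $1$ is a positive proportion; this yields $\gg K$ values of $n$ each contributing a factor $\le \theta < 1$ to the product. Hence $|\varrho_N(x)| \le \theta^{cK}$ provided $N \ge x$, i.e.\ provided $x \le N$; to push $x$ up to $\exp(C(\log N)^2)$ one notes this is automatically $\le N$ for... no — here is the real point: we only need $N$ large relative to the range of $n$ we use, and the good $n$ all lie below $x$, so we actually need $N \ge x$, which forces $x \le N$, not $x \le \exp(C(\log N)^2)$.

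So the genuine difficulty, and where the work concentrates, is squeezing the decay out of the range $n \le N$ even when $x$ is as large as $\exp(C(\log N)^2)$ — here the good $n$ near $x$ are unavailable, and one must instead exploit small $n$ where $x/n$ is a large real number whose fractional part is ``generically'' not near $0$. The mechanism is a counting/equidistribution-type estimate: among $n \le N$, how many have $\|x/n\|$ close to $0$ (equivalently, how many $n$ nearly divide $x$ within a small tolerance)? This is exactly the ``short average of the number of divisors in a prescribed small interval'' problem flagged in the introduction. I would set it up as follows: call $n \le N$ \emph{bad} if $|\cos(\pi x/n)| > 1 - \eta$, i.e.\ $\|x/n\| < \eta'$ for a small $\eta'$; then $|\varrho_N(x)| \le (1-\eta)^{\#\{\text{good }n \le N\}} \le \exp(-\eta(N - \#\{\text{bad }n\}))$, and the entire game is to show $\#\{\text{bad }n \le N\}$ is much smaller than $N$, in fact so much smaller that the good count is $\gg \exp(E\sqrt{\log x})$ — wait, we need the good count times $\eta$ (with $\eta$ itself possibly shrinking) to exceed $B\exp(E\sqrt{\log x})$. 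The cleanest packaging: partition according to the size of $\|x/n\|$, take $\eta' = \eta'(n)$ adapted, and use that bad $n$ correspond to $n$ lying within $\eta' n/x \cdot$ (roughly) of a divisor-like point of $x$; bounding their number by a divisor-function average — via Rankin's trick and the Ramanujan bound on $\sigma_s(n)$ as advertised — gives $\#\{\text{bad }n\} = o(N)$ with a power-saving-in-$\log$ that precisely produces the double-exponential $\exp(-B\exp(E\sqrt{\log x}))$, and tracking constants shows the method works for $x$ up to $\exp(C(\log N)^2)$ with any $C < 1/\log 4$. I expect the main obstacle to be exactly this divisor-counting step: getting an \emph{unconditional} and \emph{uniform} (in $x$) bound for the number of $n \le N$ with $x/n$ abnormally close to an integer, strong enough that the surviving ``good'' factors overwhelm it — the constant $1/\log 4$ will emerge from optimizing the Rankin-trick parameter against the $\sigma_s$ bound, and the $\sqrt{\log x}$ and double exponential are artifacts of that optimization rather than of anything deeper.
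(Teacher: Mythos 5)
Your treatment of~\eqref{equ:rho2} matches the paper's and is fine. The issue is with~\eqref{equ:rho3}: the strategy you describe --- reduce to lower-bounding the number of $n \le N$ with $\|x/n\|$ bounded away from $0$, then control the ``bad'' $n$ via a divisor-sum average estimated by Rankin's trick and Ramanujan's bound on $\sigma_{-s}$ --- is indeed one of the two approaches the paper presents, but it is explicitly the \emph{weaker} one. The paper's own Proposition~\ref{pro:secbound} (Section~\ref{sec:shortaverage}) carries out exactly this plan and, as the paper states, it yields the decay only for $x$ up to $\exp\!\left(C'(\log N)^2\right)$ with $C' = (4e)^{-2} + o(1) \approx 0.0085$, which is an order of magnitude short of $1/\log 4 \approx 0.72$. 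Your final guess that ``the constant $1/\log 4$ will emerge from optimizing the Rankin-trick parameter against the $\sigma_s$ bound'' is therefore wrong; no amount of optimizing that step reaches $1/\log 4$.

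What actually delivers $1/\log 4$ and the double-exponential rate is a separate arithmetic construction (Section~\ref{sec:arithmetic}) which you do not describe. The key idea is a reduction from $\mathcal{S}_1$ to a $k$th-power analogue $\mathcal{S}_k(N,\delta,x) = \{ n \le N : \|x/n^k\| \ge \delta\}$. One first gets a lower bound for $\#\mathcal{S}_k$ by direct counting in the range $n \approx x^{1/k}$: for $x \le N^k$ the relevant $n$ lie below $N$, so the obstruction you correctly noticed (good $n$ near $x$ being unavailable when $x \gg N$) disappears when you pass to $n^k \approx x$. The bridge back to $\mathcal{S}_1$ is the identity $\sum_{j=0}^{k-1}(-1)^j\binom{k-1}{j}\frac{1}{n+j} = \frac{(k-1)!}{n(n+1)\cdots(n+k-1)}$: if $\|x/(n+j)\| < \delta$ for all $j \in \{0,\dots,k-1\}$, then $\|x(k-1)!/(n\cdots(n+k-1))\| < 2^{k-1}\delta$, and by comparing $1/(n\cdots(n+k-1))$ with $1/n^k$ one deduces $\|x(k-1)!/n^k\|$ is small. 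Thus $\#\mathcal{S}_1(N,\delta,x) \gtrsim \frac{1}{k}\#\mathcal{S}_k(N,\Theta(2^k\delta),x(k-1)!)$, and with $k \asymp \sqrt{\log x}$ and $\delta \asymp 2^{-k}$ one gets $\delta^2\#\mathcal{S}_1 \gg \exp(\log x/k - k\log 4)$, which is $\exp(E\sqrt{\log x})$ with $E > 0$ precisely when $C' < 1/\log 4$. The factor $\log 4$ is a structural artifact of the $2^{k-1}\delta$ loss in the binomial identity (forcing $\delta^2 \le 4^{-k}$) measured against the $x^{1/k}$ gain from counting $\mathcal{S}_k$; it has nothing to do with $\sigma_{-s}$ or Rankin. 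This is the missing ingredient in your sketch.
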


\begin{figure}[h]
\centering
\includegraphics[width=0.60\textwidth]{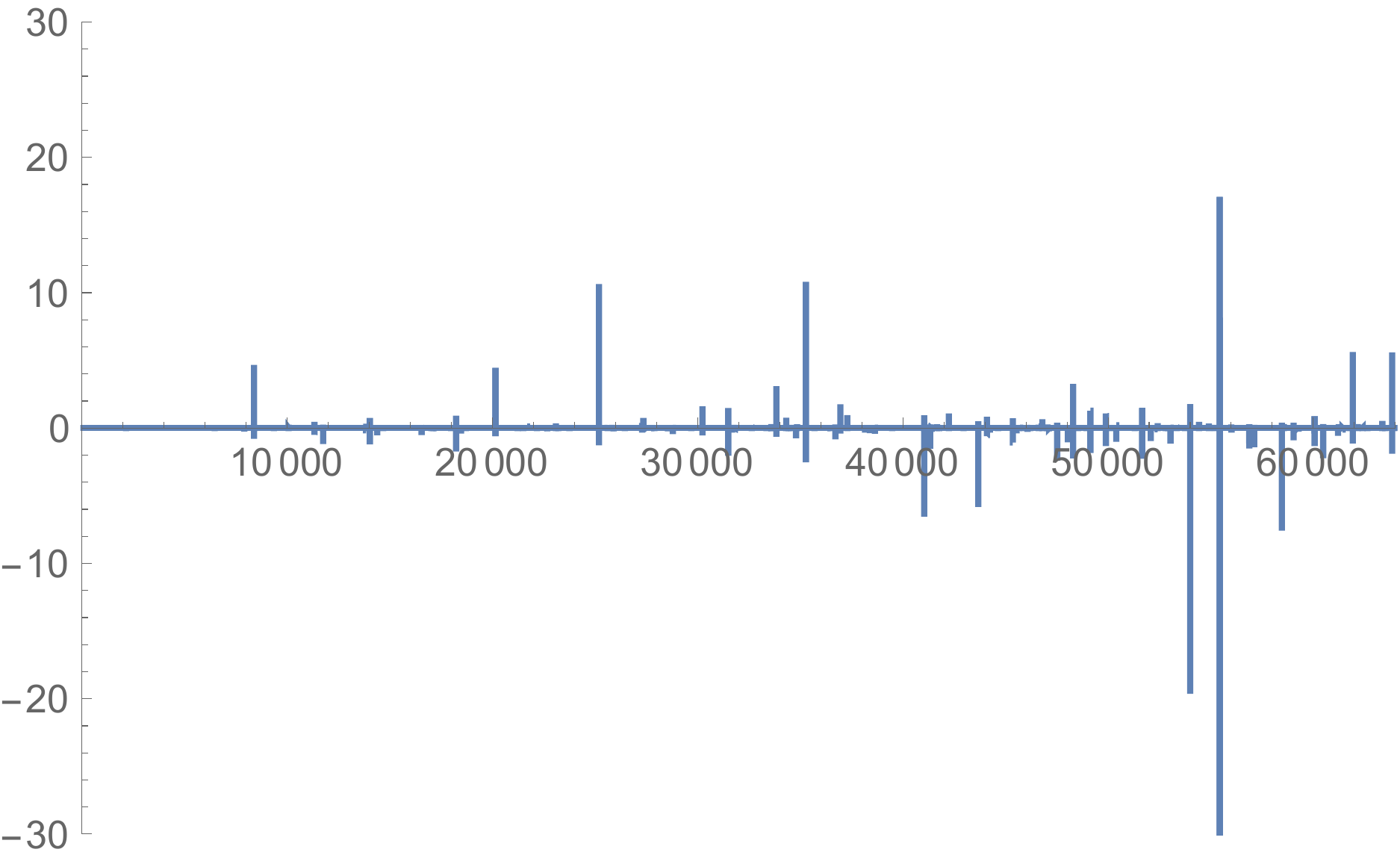}
\caption{Plot of $\varrho_{N}(x)/x$ for $N=40$ and $x\in[0,N^3]$.}
\label{Fig:3}
\end{figure}

We are now in a position to prove Theorem~\ref{thm:loclimthm}. Let $C$ be any positive constant less than
$1/\log 4$. Pick any small $\eps>0$, and set $\xi_{N,\pm \eps} := \exp\!\left(-(1\pm\eps)C(\log
N)^2\right)$ and $\xi_N:=\xi_{N,0}$. For an interval $I=[a,b]$ with $b-a>2\xi_N$, let $\Phi_{N,\eps}^+$
(respectively $\Phi_{N,\eps}^-$) be a smooth function $\R\to[0,1]$ with support contained in $[a -
\xi_{N,-\eps}, b + \xi_{N,-\eps}]$ (resp. $I$) and with $\Phi_{N,\eps}^+(x)=1$ if $x\in I$, (resp. $x \in
[a + \xi_{N,-\eps}, b - \xi_{N,-\eps}]$). Also, suppose $\Phi_{N,\eps}^{\pm(j)}(x)\ll_j
(\xi_{N,-\eps})^{-j}$ for all $j\geq 0$. It is not difficult to construct such functions. It follows that
the Fourier transform of $\Phi_{N,\eps}^{\pm}$ satisfies
\begin{equation}\label{equ:phibound}
\widehat{\Phi}_{N,\eps}^{\pm}\ll_A (1+|x|\xi_{N,-\eps})^{-A}
\end{equation}
for all $A>0$ and all $x\in\R$. Since
\begin{equation*}
\mathbb{E}[\Phi_{N,\eps}^{-}(X_N)] \leq \mathbb{P}[X_N\in I] \leq \mathbb{E}[\Phi_{N,\eps}^{+}(X_N)] ,
\end{equation*}
it suffices to show
\begin{equation*}
\mathbb{E}[\Phi_{N,\eps}^{\pm}(X_N)] = \int_\R \Phi_{N,\eps}^{\pm}(x) g(x) \, \mathrm{d} x+o_\eps(|I|) ,
\end{equation*}
because this quantity is evidently equal to $\int_I g(x) \, \mathrm{d} x + o(|I|)$.
From now on, let
$\Phi_{N,\eps}$ be one of $\Phi_{N,\eps}^{+}$, $\Phi_{N,\eps}^{-}$.
By Lemma~\ref{lem:expected} we have
\begin{equation*}
\mathbb{E}[\Phi_{N,\eps}(X_N)]
= \frac{1}{2}\int_{\R} \widehat{\Phi_{N,\eps}}(x/2) \varrho_N(x) \, \mathrm{d} x
= I_1+I_2+I_3,
\end{equation*}
where $I_1$, $I_2$, $I_3$ are the integral supported in $|x| < N^\eps$, $|x|\in[N^\eps, M_N^{1+\eps}]$,
and $|x|> M_N^{1+\eps}$, respectively, where $M_N := \xi_{N,-\epsilon}^{-1}$. Note that $M_N^{1+\eps}>
N^\eps$ for $N$ large enough, that $M_N^{1+\eps}=\xi_N^{-(1-\eps^2)}< \xi_N^{-1}$, and that
$M_N^{1+\eps}\xi_{N,-\eps} = \xi_{N,-\eps}^{-\eps} =\xi_{N}^{-\eps(1-\eps)}$ goes to infinity when $N$
goes to infinity.

Offner~\cite{Offner} showed that $\varrho(x)$ decays double exponentially. In particular, using
also~\eqref{equ:rho2}, we have
\begin{align*}
I_1
&=\frac{1}{2}\int_{-N^\eps}^{N^\eps} \widehat{\Phi_{N,\eps}}(x / 2) \varrho_N(x) \, \mathrm{d} x
 =\frac{1}{2}\int_{-N^\eps}^{N^\eps} \widehat{\Phi_{N,\eps}}(x / 2) \varrho(x)\, \mathrm{d} x
  + O\!\left(\|\widehat{\Phi_{N,\eps}}\|_\infty N^{-1+3\eps}\right) \\
&=\int_\R \widehat{\Phi_{N,\eps}}(x) \varrho(2x)\, \mathrm{d} x+O_\eps\!\left(\|{\Phi_{N,\eps}}\|_1 N^{-1+3\eps}\right).
\end{align*}
By~\eqref{equ:rho3} if $N$ is sufficiently large we have
\begin{align*}
|I_2|
&\leq \|\widehat{\Phi_{N,\eps}}\|_\infty\int_{N^\eps}^{M_N^{1+\eps}} |\varrho_N(x)| \,\mathrm{d} x \\
&\leq \|\Phi_{N,\eps}\|_1 \int_{N^\eps}^{+\infty} \!\!\exp\!\left(-B\exp\!\left(E\sqrt{\log x}\right)\right)\,\mathrm{d} x
\ll_\eps \|\Phi_{N,\eps}\|_1 N^{-1}.
\end{align*}
Now, by~\eqref{equ:phibound} we easily have
\begin{align*}
|I_3|
&\leq \int_{|x| > M_N^{1+\eps}} |\widehat{\Phi_{N,\eps}}(x)| \,\mathrm{d} x
 \ll_A \int_{M_N^{1+\eps}}^{+\infty} (1+ x\xi_{N,-\eps})^{-A} \,\mathrm{d} x\\
&\ll_A (1+ M_N^{1+\eps}\xi_{N,-\eps})^{1-A}
\ll_A \xi_{N}^{\eps(1-\eps)(A-1)}
= o_\eps(\xi_N)
= o_\eps(|I|) ,
\end{align*}
where in the last steps we have chosen $A=1+2/\eps$.
Thus, collecting the above results
\begin{align*}
\mathbb{E}[\Phi_{N,\eps}(X_N)]
&= \int_{\R} \widehat{\Phi_{N,\eps}}(x ) \varrho(2x) \, \mathrm{d} x+O_\eps(\|{\Phi_{N,\eps}}\|_1 N^{-1+3\eps})+ o_\eps(|I|)\\
&= \int_{\R} \Phi_{N,\eps}(x) g(x) \, \mathrm{d} x+O_\eps(\|{\Phi_{N,\eps}}\|_1 N^{-1+3\eps}) + o_\eps(|I|),
\end{align*}
by Parseval's theorem and the proof of Theorem~\ref{thm:loclimthm} is completed, because
$\|{\Phi_{N,\eps}}\|_1 = O_\eps(|I|)$.
\smallskip

We conclude the section with the following propositions which prove the bounds~\eqref{equ:simpleintro}
and~\eqref{equ:A3}.
\begin{pro}\label{pro:nonzero}
We have $\m_N \neq 0$ for each positive integer $N$. Moreover, as $N\to\infty$,
\begin{equation}\label{equ:simple}
\m_N > \exp\!\left(-N + o(N)\right).
\end{equation}
\end{pro}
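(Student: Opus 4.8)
\textbf{Proof plan for Proposition~\ref{pro:nonzero}.}
The plan is to prove the two assertions separately, the nonvanishing being essentially a divisibility argument and the quantitative bound a refinement of the same idea via the Prime Number Theorem.

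For the first assertion, fix $N$ and let $\s=\sum_{n=1}^N s_n/n\in\SS_N$ with each $s_n\in\{-1,+1\}$. Set $L_N:=\lcm\{1,\dots,N\}$. Then $L_N\s=\sum_{n=1}^N s_n L_N/n$ is an integer, so it suffices to show it is nonzero. Let $p$ be the largest prime with $p\le N$ (which exists since $N\ge 1$; for $N=1$ the statement $\m_1=1\neq 0$ is trivial, so assume $N\ge 2$). By Bertrand's postulate, $2p>N$, hence $p$ is the unique multiple of $p$ in $\{1,\dots,N\}$, and $p\mid N!$ to the first power only; in particular $p\mid L_N$ but $p\nmid L_N/p$, while $p\mid L_N/n$ for every $n\neq p$ in $\{1,\dots,N\}$ (indeed $v_p(L_N)=1$ and $v_p(n)=0$ for such $n$). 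Therefore $L_N\s\equiv s_p\,(L_N/p)\not\equiv 0\pmod p$, so $L_N\s\neq 0$, giving $\s\neq 0$ and hence $\m_N\neq 0$.

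For the quantitative bound, observe that for any $\s\in\SS_N$ the number $L_N\s$ is a nonzero integer, so $|\s|\ge 1/L_N$, whence $\m_N\ge 1/L_N$. By the Prime Number Theorem, $\log L_N=\psi(N)=N+o(N)$ as $N\to\infty$, where $\psi$ is the second Chebyshev function; therefore $\m_N\ge 1/L_N=\exp(-N+o(N))$, which is~\eqref{equ:simple}. (One does not even need the full strength of PNT here: the elementary Chebyshev bound $\psi(N)\ll N$ already yields $\m_N>\exp(-O(N))$, and $\psi(N)\sim N$ sharpens the constant to $1$.)

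There is essentially no serious obstacle: the only point requiring a little care is the local argument at the prime $p$, where one must check that $p$ divides $L_N/n$ for every $n\le N$ with $n\neq p$ but does not divide $L_N/p$; this is exactly the statement that $v_p(L_N)=1$, which follows from Bertrand's postulate guaranteeing that $p$ is the sole multiple of $p$ not exceeding $N$. The passage to the quantitative estimate is then immediate from $|L_N\s|\ge 1$ and standard bounds on $\log L_N$.
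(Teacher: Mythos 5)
Your proof is correct, and the quantitative part (the bound $\m_N\ge 1/L_N$ combined with $\log L_N=\psi(N)=N+o(N)$) coincides exactly with the paper's. The nonvanishing argument, however, follows a genuinely different local route: you single out the largest prime $p\le N$, invoke Bertrand's postulate to guarantee $2p>N$ (so that $v_p(L_N)=1$ and $p$ is the unique $n\le N$ with $p\mid n$), and conclude $L_N\s\equiv s_p L_N/p\not\equiv 0\pmod p$. The paper instead takes $2^k$ to be the largest power of $2$ not exceeding $N$ and argues $2$-adically: $L_N/n$ is odd precisely when $n=2^k$, so $L_N\s$ is an odd integer and in particular nonzero. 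Both are instances of the same "unique unit at a place" principle, but the paper's version is a bit more elementary and self-contained, as it needs nothing beyond the trivial observation that some power of $2$ lies in $(N/2,N]$, whereas yours imports Bertrand's postulate. On the other hand, your version is arguably more natural if one is already thinking in terms of prime factorizations of $L_N$, and it illustrates that any prime or prime power occurring to its maximal exponent in exactly one $n\le N$ would do. Either argument is fully rigorous, and your handling of the edge case $N=1$ (where no prime $\le N$ exists) is appropriate.
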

\begin{proof}
For each positive integer $N$, define $L_N := \lcm\{1, \ldots, N\}$. Let $k$ be the unique nonnegative
integer such that $2^k \leq N < 2^{k+1}$. Then, for all $n \in \{1, \ldots, N\}$, we have that $L_N / n$
is an integer which is odd if and only if $n = 2^k$. As a consequence, for all $s_1, \ldots, s_N \in
\{-1, +1\}$, we have that
\begin{equation*}
\sum_{n \,=\, 1}^N \frac{L_N}{n} s_n
\end{equation*}
is an odd integer and, in particular, the sum $\sigma_N := \sum_{n \,=\, 1}^N s_n / n$ is nonzero, so that
$\m_N > 0$.
Furthermore, $|\sigma_N| \geq 1 / L_N$.
Thanks to the Prime Number Theorem, we have
\begin{equation*}
L_N = \exp(\psi(N)) = \exp(N + o(N))
\end{equation*}
as $N \to +\infty$, where $\psi$ is Chebyshev's function, and~\eqref{equ:simple} follows.
\end{proof}

\begin{pro}\label{pro:lower}
For almost all $\tau\in\R$, as $N \to +\infty$ we have
\begin{equation*}
\m_N(\tau) > \exp\!\left(-0.665N\right) .
\end{equation*}
\end{pro}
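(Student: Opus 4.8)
The plan is to bound $\m_N(\tau)$ from below by a counting argument on the number of admissible sign patterns versus the granularity of the set $\SS_N$, and then optimize. The key observation is that $\SS_N$ consists of $2^N$ values (with multiplicity) of the form $\sigma_N = \sum_{n=1}^N s_n/n$, all lying in the interval $[-H_N, H_N]$ with $H_N = \log N + O(1)$. If, for a given $\tau$, the bound $\m_N(\tau) \leq \delta$ holds, then the ball $B(\tau,\delta)$ contains at least one point of $\SS_N$; the heuristic is that a ``typical'' $\tau$ should not be abnormally well approximated, so that if the points of $\SS_N$ were spread out we would expect $\m_N(\tau) \gtrsim H_N / 2^N = \exp(-N + o(N))$ — but this only gives the weaker bound~\eqref{equ:simple}. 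To get the constant $0.665 < \log 2$ one must exploit that the values $\sigma_N$ are not all distinct and, more importantly, that they cluster: the number of \emph{distinct} values of $\SS_N$, or the number lying in a short interval, is much smaller than $2^N$.

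Concretely, I would fix a small $\delta > 0$, say $\delta = \exp(-\beta N)$ with $\beta$ slightly below $\log 2$, and estimate the measure of the ``bad set''
\begin{equation*}
\mathcal{B}_N(\delta) := \{\tau \in [-H_N, H_N] : \m_N(\tau) \leq \delta\} = \bigcup_{\s \in \SS_N} [\s - \delta, \s + \delta].
\end{equation*}
By the union bound, $|\mathcal{B}_N(\delta)| \leq 2\delta \cdot \#\SS_N \leq 2\delta \cdot 2^N$, but this is wasteful. Instead I would bound $\#\SS_N$ by the number of distinct sums, which I claim is $O(2^{\beta_0 N})$ for some explicit $\beta_0 < \log 2$. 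One cheap way to see a nontrivial saving: the sum $\sigma_N$ is determined by the integer $\sum_{n \leq N} (L_N/n) s_n$, and for the indices $n \leq N$ that are \emph{not} powers of $2$ times an odd prime power pattern, many partial cancellations force the number of reachable integers to be smaller than $2^N$. A cleaner route is to split $\{1,\ldots,N\}$ into blocks and note that within a dyadic block $(N/2, N]$ the terms $s_n/n$ are all of size $\asymp 1/N$, so the block contributes a sum lying in an interval of length $\asymp 1$ but taking only $\ll N$ essentially-distinct values up to an error $o(\delta)$ — iterating over $\log_2 N$ blocks gives $\#\SS_N \leq N^{O(\log N)} \cdot (\text{something})$, which is far too small; the real argument must be more careful, using that only the \emph{last} few blocks are coarse while the first block $n \leq \sqrt{N}$ still carries roughly $2^{\sqrt N}$ genuine degrees of freedom. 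Balancing these, one finds the entropy of $\SS_N$ is $(\log 2 - c + o(1))N$ for an explicit $c > 0$ with $\log 2 - c > 0.665$, whence $\sum_N |\mathcal{B}_N(\exp(-0.665N))| < \infty$ and Borel--Cantelli finishes the proof.

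The main obstacle will be producing a clean, quantitatively sufficient upper bound on $\#\SS_N$ (or on the number of elements of $\SS_N$ in a window of length $\delta$) that beats $2^{0.693\ldots N}$ by a definite margin down to $0.665 N$. The crude counting $\#\SS_N \leq 2^N$ is trivial; the saving has to come from a genuine arithmetic input. I expect the right tool is to observe that two sign vectors $s, s'$ give the same (or $\delta$-close) value of $\sigma_N$ whenever they agree on the ``small'' indices (those $n$ with $1/n \gg \delta$, i.e.\ $n \ll 1/\delta$, of which there are only $O(1/\delta) = O(e^{0.665N})$, hence \emph{all} of them when $\delta$ is that small — so this particular cut is vacuous) and their ``large''-index parts cancel up to $\delta$; since $\sum_{n > T} 1/n$ is only $\log(N/T)$, for $T$ as large as $N^{1-\eta}$ the tail has total mass $\ll \eta \log N$, so flipping signs there moves $\sigma_N$ by at most $O(\eta \log N)$, giving many collisions at scale $\delta$ once $\delta \gg$ that. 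Quantifying this trade-off — how many of the $2^N$ vectors can be grouped into clusters of diameter $\leq 2\delta$, as a function of $\delta$ — and checking that the resulting measure bound is summable for $\delta = \exp(-0.665N)$, is the technical heart; the constant $0.665$ presumably is exactly what comes out of optimizing the split point $T$. Once that combinatorial counting lemma is in hand, the remaining steps (union bound, Borel--Cantelli, reduction from $\tau \in [-H_N,H_N]$ to all $\tau$ by periodicity of the ``for infinitely many $N$'' statement) are routine.
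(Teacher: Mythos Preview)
Your Borel--Cantelli framework is exactly right, and it is precisely what the paper does: once one has $\#\SS_N \le e^{\alpha N}$ for some $\alpha<0.665$, the union bound gives
\[
\big|\{\tau : \m_N(\tau)\le e^{-0.665N}\}\big|\le 2e^{(\alpha-0.665)N},
\]
which is summable in $N$, and Borel--Cantelli finishes. The gap is entirely in your proposed mechanism for the bound on $\#\SS_N$.

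None of the clustering ideas you sketch can work at scale $\delta=e^{-0.665N}$. Your dyadic-block claim---that $\sum_{N/2<n\le N}s_n/n$ takes only $\ll N$ values ``up to error $o(\delta)$'' because all terms are $\asymp 1/N$---fails: replacing each $1/n$ by $1/N$ incurs a total error of order $\sum_{N/2<n\le N}|1/n-1/N|\asymp 1$, not $o(\delta)$, so the block genuinely has $2^{N/2}$ degrees of freedom at that scale. Your tail idea fares no better: varying the signs on $n>T$ moves $\sigma_N$ by as much as $2\sum_{n>T}1/n\asymp\log(N/T)$, and for this to be $\le\delta$ one would need $T>N$, which is vacuous. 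In short, $\delta$ is far too small for any \emph{approximate}-collision argument; what is needed are \emph{exact} collisions in $\SS_N$, and the constant $0.665$ does not arise from optimizing a split point $T$.

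The paper's device is arithmetic. The six unit fractions $1,\tfrac12,\tfrac13,\tfrac14,\tfrac16,\tfrac{1}{12}$ satisfy nontrivial $\pm1$-relations (for instance $1-\tfrac12-\tfrac13-\tfrac16=0$), so that $\sum_{d\mid 12}s_d/d$ assumes only $29$ distinct values rather than $2^6=64$. One then tiles a positive proportion of $\{1,\dots,N\}$ by disjoint scaled copies $\{k,2k,3k,4k,6k,12k\}$; a density computation (choosing $k$ with controlled $2$- and $3$-adic valuations to ensure disjointness) shows one can cover $\tfrac{6}{28}N$ integers this way. Each such $6$-tuple contributes a factor $29$ instead of $64$ to $\#\SS_N$, giving
\[
\#\SS_N\le 29^{N/28}\cdot 2^{(1-6/28)N}=e^{\alpha N},\qquad
\alpha=\tfrac{1}{28}\log 29+\tfrac{22}{28}\log 2\approx 0.6648<0.665.
\]
This Egyptian-fraction coincidence is the missing idea; the paper itself notes that the constant is not sharp and could be pushed down further with more such identities.
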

\begin{proof}
The claim follows by the Borel--Cantelli lemma: suppose we have an upper bound $\#\SS_N \leq e^{\alpha
N}$ for some $\alpha>0$, for all large enough $N$. Then for any fixed $\eps>0$
\begin{align*}
\mathcal{E} &:= \{\tau \in \R : \m_N(\tau) \leq e^{-(\alpha+\eps) N} \text{ for infinitely many } N\} \\
&= \bigcap_{M = 1}^\infty \bigcup_{N \geq M} \{\tau \in \R : \m_N(\tau) \leq e^{-(\alpha+\eps) N}\} .
\end{align*}
The Lebesgue measure of $\mathcal{E}$ is bounded by
\begin{align*}
|\mathcal{E}|
\leq \inf_{M} \sum_{N \geq M} 2e^{-(\alpha+\eps) N} \#\SS_N
\leq \inf_{M} \sum_{N \geq M} 2e^{-\eps N}
= \inf_{M} \frac{2e^{-\eps M}}{1 - e^{-\eps}}
= 0.
\end{align*}
This implies that for almost every $\tau$, the lower bound $\m_N(\tau) > e^{-(\alpha+\eps) N}$ holds for
all $N$ large enough. The upper bound for $\#\SS_N$ with $\alpha = \log 2$ is trivial, since $\#\SS_N
\leq 2^N$. The claim will follow from a slightly better estimation for this quantity. In fact, the sum
\[
\frac{s_1}{1} + \frac{s_2}{2} + \frac{s_3}{3} + \frac{s_4}{4} + \frac{s_6}{6} + \frac{s_{12}}{12}
\]
takes only $29$ different values when $s_j\in\{\pm 1\}$.
%
%
%
Thus, let
\[
F := \{\,\{k,2k,3k,4k,6k,12k\}\colon k\in D\}
\]
with
\[
D := \{2^{3a}3^{2b}m\colon m\geq 1,\ a,b\geq 0,\ 2,3\nmid m\}.
\]
With this choice for $D$ any natural number $n$ can be contained in at most one $6$-tuple. Indeed, the
numbers in $F$ associated with a given $k=2^{3a}3^{2b}m$ are
\[
\{2^{3a}3^{2b}m,    \
  2^{3a+1}3^{2b}m,  \
  2^{3a}3^{2b+1}m,  \
  2^{3a+2}3^{2b}m,  \
  2^{3a+1}3^{2b+1}m,\
  2^{3a+2}3^{2b+1}m\}
\qquad
2,3\nmid m
\]
and comparing the evaluations in $2$ and $3$ we see that no number of this family can be produced twice.
The cardinality of the union of all $5$-tuples in $F$ containing numbers $\leq N$ is $6$ times the number
of $k\in D$ which are $\leq N/12$. The number of such $k$ can be easily seen to be
\[
\frac{1+o(1)}{(1-2^{-3})(1-3^{-2})}\frac{\varphi(6)}{6}\frac{N}{12} = \Big(\frac{1}{28}+o(1)\Big)N.
\]
As said, any $6$-tuple gives rise to only $17$ different values, not $64$, thus the inequality  $\#\SS_N
\leq e^{\alpha N}$ holds for any
\[
\alpha > \Big(1-\frac{5}{28}\Big)\log 2 + \frac{1}{28}\log 29
= 0.6648\ldots,
\]
and the result follows.
\end{proof}

\section{The bounds for $\varrho$ and $\varrho_N$}\label{sec:rho}
In this section we prove Lemma~\ref{lem:rhoN}. We observe that for $0\leq x\leq \sqrt{N}$ we have
\begin{equation*}
\prod_{n=N+1}^{\infty}\cos(\pi x/n)
=\prod_{n=N+1}^{\infty}\big(1+O(( x/n)^2)\big)
=\exp(O(x^2/N))
=1+O(x^2/N),
\end{equation*}
which proves~\eqref{equ:rho2}.

We now move to the proof of~\eqref{equ:rho3}. We remark that it is sufficient to prove such inequality
for $x \in \left[N, \exp\!\left(C(\log N)^2\right)\right]$; indeed, one can reduce to this case also when
for $x<N$ since $|\varrho_{N}(x)|\leq |\varrho_{\lfloor x\rfloor}(x)|$.

For positive integers $k, N$ and for real $\delta, x \geq 0$, define
\begin{equation}\label{equ:definSk}
\mathcal{S}_k(N, \delta, x)
:= \big\{n \in \{1, \ldots, N\} : \|x / n^k\| \geq \delta \big\},
\end{equation}
where $\|y\|$ denotes the distance of $y \in \R$ from its nearest integer. By the following lemma, the
set $\mathcal{S}_1(N, \delta, x)$ plays a crucial role in the proof of~\eqref{equ:rho3}.
\begin{lem}\label{lem:varrhoNexp}
We have
\begin{equation*}
|\varrho_N(x)| \leq \exp\!\left(-\frac{\pi^2 \delta^2}{2} \cdot \#\mathcal{S}_1(N, \delta, x)\right)
\end{equation*}
for each positive integers $N$ and for all $x, \delta \geq 0$.
\end{lem}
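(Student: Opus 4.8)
The plan is to bound $|\varrho_N(x)|$ by controlling each factor $|\cos(\pi x/n)|$ individually, keeping only those factors coming from $n \in \mathcal{S}_1(N,\delta,x)$ and bounding the remaining factors trivially by $1$. Since $|\cos(\pi x/n)| \leq 1$ for every $n$, we have
\begin{equation*}
|\varrho_N(x)| = \prod_{n=1}^N \bigl|\cos(\pi x/n)\bigr| \leq \prod_{n \in \mathcal{S}_1(N,\delta,x)} \bigl|\cos(\pi x/n)\bigr|.
\end{equation*}
So it suffices to show that for each $n \in \mathcal{S}_1(N,\delta,x)$ one has $|\cos(\pi x/n)| \leq \exp(-\pi^2\delta^2/2)$, and then multiply these bounds together over the $\#\mathcal{S}_1(N,\delta,x)$ indices in the set.

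For the pointwise estimate, first note that $|\cos(\pi y)|$ depends only on $\|y\|$: writing $y = m + \theta$ with $m \in \Z$ and $\theta = \pm\|y\| \in [-1/2,1/2]$, we get $|\cos(\pi y)| = |\cos(\pi\theta)| = \cos(\pi\|y\|)$. Hence if $n \in \mathcal{S}_1(N,\delta,x)$, i.e.\ $\|x/n\| \geq \delta$ (and of course $\|x/n\| \le 1/2$ always), then $|\cos(\pi x/n)| = \cos(\pi\|x/n\|) \leq \cos(\pi\delta)$ by monotonicity of $\cos$ on $[0,\pi/2]$ — assuming $\delta \le 1/2$; if $\delta > 1/2$ the set $\mathcal{S}_1$ is empty and the bound is trivial. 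The remaining step is the elementary inequality $\cos(\pi\delta) \leq \exp(-\pi^2\delta^2/2)$ for $\delta \in [0,1/2]$. This follows from $\cos t \le e^{-t^2/2}$ for $|t| \le \pi/2$, which one proves by comparing Taylor series or by noting that $\log\cos t + t^2/2$ vanishes to order $4$ at $0$ with the right sign; setting $t = \pi\delta$ gives the claim.

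Combining, each of the $\#\mathcal{S}_1(N,\delta,x)$ factors indexed by $\mathcal{S}_1(N,\delta,x)$ contributes at most $\exp(-\pi^2\delta^2/2)$, and the product of the rest is at most $1$, yielding
\begin{equation*}
|\varrho_N(x)| \leq \exp\!\left(-\tfrac{\pi^2\delta^2}{2}\cdot\#\mathcal{S}_1(N,\delta,x)\right),
\end{equation*}
as desired. There is no real obstacle here: the only mild point to get right is the reduction $|\cos(\pi x/n)| = \cos(\pi\|x/n\|)$ and the scalar inequality $\cos(\pi\delta)\le e^{-\pi^2\delta^2/2}$, and one should handle the degenerate case $\delta > 1/2$ (empty set) separately so that the monotonicity argument is applied only on the range where $\cos$ is decreasing.
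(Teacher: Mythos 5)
Your proof is correct and is essentially the same as the paper's: the paper cites the inequality $|\cos(\pi y)| \leq \exp\!\bigl(-\pi^2\|y\|^2/2\bigr)$ and applies it to the factors with $n \in \mathcal{S}_1(N,\delta,x)$, bounding the remaining factors by $1$. You simply unpack that inequality into the reduction $|\cos(\pi y)| = \cos(\pi\|y\|)$, monotonicity of $\cos$ on $[0,\pi/2]$, and the scalar bound $\cos(\pi\delta) \leq e^{-\pi^2\delta^2/2}$, and you correctly note the trivial case $\delta > 1/2$.
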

\begin{proof}
The claim follows easily from the inequality
\begin{equation*}
|\cos(\pi x)| \leq \exp\!\left(-\frac{\pi^2\|x\|^2}{2}\right),
\end{equation*}
holding for all $x \in \R$, and from the definitions of $\varrho_N(x)$ and $\mathcal{S}_1(N,\delta,x)$.
\end{proof}
In the next two subsections we will prove a bound for $\varrho_N$ by giving two lower bounds for
$\mathcal{S}_1(N, \delta, x)$ for some suitable values of $\delta$. More precisely, in
Section~\ref{sec:arithmetic} we will complete the proof of Lemma~\ref{lem:rhoN}, showing
that~\eqref{equ:rho3} holds for all $x \in \left[N, \exp\!\left(C(\log N)^2\right)\right]$.
However, before doing this, in the next subsection we give a simpler argument proving that in the range
$x \in \left[N, \exp\!\left(C'(\log N)^2\right)\right]$ one has $|\varrho_N(x)|<1/x^2$. We remark that
this weaker inequality would still be sufficient for our application for Theorems~\ref{thm:upper}
and~\ref{thm:loclimthm}. If optimized, this argument would lead to the constant $C'=(4e)^{-2}+o(1)$.

\subsection{A short average of the number of divisors in a prescribed interval}
\label{sec:shortaverage}
In this subsection we prove the following proposition.
\begin{pro}\label{pro:secbound}
There exists $C'> 0$ such that $|\varrho_N(x)|<1/x^2$ for all sufficiently large positive integers $N$
and for all $x \in \left[N, \exp\!\left(C'(\log N)^2\right)\right]$.
\end{pro}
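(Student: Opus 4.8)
The plan is to deduce Proposition~\ref{pro:secbound} from Lemma~\ref{lem:varrhoNexp} by producing a good lower bound for $\#\mathcal{S}_1(N,\delta,x)$ when $\delta$ is a small fixed constant, say $\delta = 1/4$. By Lemma~\ref{lem:varrhoNexp}, if we can show that for $x\in\left[N,\exp\!\left(C'(\log N)^2\right)\right]$ the set $\mathcal{S}_1(N,1/4,x)$ has at least, say, $c(\log x)/\log\log x$ elements for a suitable absolute constant $c$ (or even just $\gg \log x$), then $|\varrho_N(x)|\leq\exp\!\left(-\tfrac{\pi^2}{32}\#\mathcal{S}_1(N,1/4,x)\right)<1/x^2$ once $\#\mathcal{S}_1(N,1/4,x)\geq (64/\pi^2)\log x$, which is exactly the inequality we want. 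So the whole game is: for how many $n\leq N$ is $x/n$ \emph{not} within $1/4$ of an integer?

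The natural strategy is to count the complement. Let $\mathcal{T} := \{n\in\{1,\dots,N\}\colon \|x/n\|<1/4\}$; we want to show $\#\mathcal{T}\leq N - (64/\pi^2)\log x$, i.e. that $\mathcal{T}$ misses a logarithmic-size set of integers. Now $n\in\mathcal{T}$ means there is an integer $q$ with $|x/n - q|<1/4$, i.e. $x\in(qn-n/4, qn+n/4)$, i.e. $n$ lies within $n/4$ of a divisor-like point $x/q$. Reformulating: $n\in\mathcal{T}$ iff there exists an integer $q$ such that $\lfloor x/q\rceil$ is close to... actually the cleanest reformulation is that $\mathcal{T}$ is the set of $n\le N$ lying in one of the intervals $\bigl(\tfrac{4x}{4q+1},\tfrac{4x}{4q-1}\bigr)$ for $q\ge 1$ (the condition $\|x/n\|<1/4$ with quotient $q$ rearranges to $4q-1<4x/n<4q+1$). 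These are $O(x/q^2)$-long intervals around $x/q$, so their union over $q$ up to $x/N$ has total length $\ll \sum_{q\le x/N} x/q^2 \ll N$ — unhelpfully large in general. The key point must instead be counting \emph{integers} in these intervals rather than their measure: the interval around $x/q$ has length $<1$ when $q^2 > 2x$, hence contains at most one integer, and for $q\le\sqrt{2x}$ there are only $O(\sqrt x)$ such $q$ contributing $O(\sqrt x\cdot\sqrt x/1)=O(\sqrt x)$... I would need to be more careful, but the upshot I expect is a bound of the shape $\#\mathcal{T}\leq C_1 x/q_{\min} + C_2\sqrt{x}$-type terms that one balances against $N$.

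I suspect, however, that the intended argument runs the other way and is \emph{divisor-function based}, matching the paper's own description (``a short average of the number of divisors in a prescribed interval''). Here is that route. Summing over $n\le N$ the count of $q$ with $\|x/n - q\|<1/4$ is at most $\sum_{n\le N}\bigl(1 + \#\{q\colon |x-qn|<n/4\}\bigr)$; by Rankin's trick one bounds $\#\mathcal{T}$ by relating $n\in\mathcal{T}$ to $n$ dividing some integer in a short window near $x$. Concretely, if $\|x/n\|<1/4$ then $n$ divides one of the $\lceil n/2\rceil$ integers in $(x-n/4,x+n/4)$... summing, $\sum_{n\le N}\mathbf 1_{n\in\mathcal{T}} \le \sum_{|m-x| < N/4}\ \sum_{n\le N,\ n\mid m,\ n > 4|m-x|} 1 \le \sum_{|m-x|<N/4} d(m)$, an average of the divisor function over a short interval of length $\ll N$ around $x$. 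By the elementary bound $\sum_{m\le y} d(m) = y\log y + O(y)$ and a short-interval version, this sum is $\ll N\log x$ — again too weak by a logarithmic factor to immediately give $\#\mathcal{T}\le N - (64/\pi^2)\log x$. So the real obstacle, and where I expect the delicacy to lie, is obtaining a bound for $\#\mathcal{T}$ that is $N - \Omega(\log x)$ rather than merely $O(N\log x)$; one must gain from the fact that $n$ must divide an integer in a window of length only $\approx n/2$ (not the full $N$), so that each $m$ near $x$ is ``hit'' only by its divisors $n$ with $n\gtrsim |m-x|$, and then apply a bound on $\sum_{m} d_{>\text{something}}(m)$. I would push this through by using Ramanujan's bound for $\sigma_s(n)$ (as the paper promises) applied via Rankin's trick with a well-chosen parameter $s=s(x)$, optimizing to produce the constant $C'=(4e)^{-2}$ mentioned in the remark after the statement; the restriction $x\le\exp\!\left(C'(\log N)^2\right)$ is exactly what makes $\log x$ small enough compared to $N$ for the Rankin parameter to be chosen admissibly, while $x\geq N$ ensures the relevant divisors $n\leq N$ are genuinely constrained.
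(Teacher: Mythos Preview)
Your framework is right --- Lemma~\ref{lem:varrhoNexp}, then count the complement via divisors of integers in a short window around $x$, then Rankin's trick plus Ramanujan's bound on $\sigma_{-w}$ --- but the choice $\delta=1/4$ is the genuine gap, and it cannot be repaired by tweaking the divisor estimate. With $\delta$ fixed, the window of $m$'s near $x$ has length $\asymp N$, and your own computation shows the resulting divisor sum is $\asymp N\log x$; no amount of Rankin weighting will bring a sum of $\asymp N$ terms, each potentially of size $\exp\bigl((\log x)^{1-w+o(1)}\bigr)$, down to $N-\Omega(\log x)$. The refinement you sketch (restricting to divisors $n>4|m-x|$) does not rescue this: summing over $m$ in a window of length $N$ still produces $\gg N$ in total.

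The paper's key idea, which you are missing, is to let $\delta$ shrink with $N$: it takes $\delta=\tfrac{4\sqrt{\log x}}{\pi}\,N^{-1/2}$. This changes the target to $\#\mathcal{S}_1(N,\delta,x)\geq N/4$ (since now $\tfrac{\pi^2}{2}\delta^2\cdot\tfrac{N}{4}=2\log x$), but crucially the window of $m$'s has length only $2\delta N\asymp\sqrt{N\log x}$. Lemma~\ref{lem:divisoraverage} then reduces the problem to showing $D(x,N):=\sum_{|m-x|<\delta N}\#\{n\mid m:\,N/2\le n\le N\}<N/4$. Now Rankin with weight $(N/n)^w$ gives $D(x,N)\ll \sqrt{N\log x}\cdot N^{w}\cdot\max_{m\le 2x}\sigma_{-w}(m)$, and Ramanujan's bound $\sigma_{-w}(m)\le\exp\bigl(C_1(\log m)^{1-w}/\log\log m\bigr)$ with the choice $w=\tfrac12-1/\log\log 2x$ makes this $o(N)$ precisely when $\log x\ll(\log N)^2$. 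The balancing of $\delta$ against the required size of $\#\mathcal{S}_1$ is the whole point; a fixed $\delta$ throws this balance away.
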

We start with the following lemma, which shows that the size of $\mathcal{S}_1(N, \delta, x)$ is strictly
related to the size of a certain divisor sum.
\begin{lem}\label{lem:divisoraverage}
For any $0<\delta<\frac{1}{2}$, $x\in\R$ and $N\in\N$ we have
\begin{equation*}
\frac{N}{2} - \sum_{x -\delta N \,<\, m \,<\, x + \delta N} \sum_{\substack{n \,\mid\, m \\ N / 2 \,\leq\, n \,\leq\, N}} 1
< \#\mathcal{S}_1(N, \delta, x)
< N- \sum_{x -\frac{\delta}{2} N \,<\, m \,<\, x + \frac{\delta}{2} N} \sum_{\substack{n \,\mid\, m \\ N / 2 \,\leq\, n \,\leq\, N}} 1.
\end{equation*}
\end{lem}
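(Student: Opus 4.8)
The plan is to relate the counting set $\mathcal{S}_1(N,\delta,x)$ directly to the number of integers $n\in[N/2,N]$ for which $x/n$ is \emph{close} to an integer, and then recognize the latter as a divisor-counting problem. The starting observation is that $\|x/n\|<\delta$ for $n\in[N/2,N]$ means there is an integer $m$ with $|x/n-m|<\delta$, i.e. $|x-mn|<\delta n$. Since we restrict to $n\leq N$, this forces $|x-mn|<\delta N$, so $mn$ lies in the interval $(x-\delta N,x+\delta N)$; conversely, if $m$ is an integer in $(x-\frac{\delta}{2}N,x+\frac\delta2 N)$ and $n\mid m$ with $n\in[N/2,N]$, then writing $m=n\ell$ we get $|x/n-\ell|=|x-m|/n\leq |x-m|/(N/2)<\delta$, so $n\notin\mathcal{S}_1$. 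These two one-sided implications are exactly what produce the two inequalities.

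The key steps, carried out in order: \textbf{(1)} Write $\#\mathcal{S}_1(N,\delta,x)=N-\#\{n\leq N:\|x/n\|<\delta\}$, and note that the subtracted count is at least $\#\{n\in[N/2,N]:\|x/n\|<\delta\}$ and at most $N/2+\#\{n\in[N/2,N]:\|x/n\|<\delta\}$ (the extra $N/2$ accounting crudely for $n<N/2$). \textbf{(2)} For the upper bound on $\#\mathcal{S}_1$: every $n\in[N/2,N]$ with $\|x/n\|<\delta$ has the nearest integer $m:=$ round$(x/n)$ satisfying $|x-mn|<\delta n\leq \delta N$... but to land in the tighter interval $(x-\frac\delta2 N,x+\frac\delta2 N)$ one should instead use $n\leq N$ only after halving, i.e. run the argument with $\delta/2$ in place of $\delta$ on the side where $n\in[N/2,N]$ so that $\delta n/ $ with $n\le N$ still gives $<\delta N/$; more carefully, pair $n$ with $m=$round$(x/n)$, get $m\in(x-\delta n,x+\delta n)\subseteq(x-\delta N,x+\delta N)$, and since distinct $n$ give the constraint $n\mid m$, summing over admissible $m$ and $n\mid m$ with $n\in[N/2,N]$ over-counts $\#\{n\in[N/2,N]:\|x/n\|<\delta\}$, which yields the \emph{lower} bound for $\#\mathcal{S}_1$. \textbf{(3)} For the other direction: given $m\in(x-\frac\delta2 N,x+\frac\delta2 N)$ and a divisor $n$ of $m$ in $[N/2,N]$, the quotient $\ell=m/n$ is an integer with $|x/n-\ell|\leq\frac{|x-m|}{N/2}<\delta$, so each such pair $(m,n)$ certifies a distinct $n\notin\mathcal{S}_1$, and these $n$ are distinct because $n$ determines $m=n\cdot$round$(x/n)$; hence $\#\{n\in[N/2,N]:n\notin\mathcal{S}_1\}\geq$ the double sum with window $\frac\delta2 N$, giving the \emph{upper} bound for $\#\mathcal{S}_1$. \textbf{(4)} Combine with step (1), matching the window widths $\delta N$ (lower bound side) and $\frac\delta2 N$ (upper bound side) exactly as in the statement.

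The only mild subtlety — and thus the main thing to be careful about — is the bookkeeping that a given $n$ is counted at most once in each double sum: one must verify that for fixed $n$ there is at most one multiple $m$ of $n$ in the relevant window with $|x/n-m/n|<\delta$ (true since the window has length $<2\delta N\leq 2\delta\cdot 2n=4\delta n$, so for $\delta<\tfrac12$ two such multiples would be $\geq n$ apart yet both within $2\delta n<n$ of $x$, a contradiction once one sharpens to the $\frac\delta2$-window or argues via $\|x/n\|$ having a unique nearest integer). There is no deep obstacle here; it is a clean inclusion–exclusion / double-counting argument, and the strict inequalities in the statement come for free from the strict inequalities defining the open windows and from the harmless loss in the crude $N/2$ term for small $n$. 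Once Lemma~\ref{lem:divisoraverage} is in place, Proposition~\ref{pro:secbound} follows by bounding the divisor sum $\sum_{|m-x|<\delta N/2}\sum_{n\mid m,\,N/2\leq n\leq N}1$ from above (so that $\#\mathcal{S}_1$ is bounded below) and feeding the result into Lemma~\ref{lem:varrhoNexp}.
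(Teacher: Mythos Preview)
Your approach is correct and essentially identical to the paper's: both proofs sandwich $\#\mathcal{S}_1(N,\delta,x)$ between $N/2 - T$ and $N-T$ where $T=\#\{n\in[N/2,N]:\|x/n\|<\delta\}$, and then compare $T$ with the divisor double sums via the correspondence $n\leftrightarrow m=n\cdot\mathrm{round}(x/n)$, using $n\le N$ for the inclusion into the $\delta N$-window and $n\ge N/2$ for the reverse inclusion from the $\tfrac{\delta}{2}N$-window. The only suggestion is to streamline the write-up of step~(2) and the uniqueness subtlety: for the upper bound the window $(x-\tfrac{\delta}{2}N,\,x+\tfrac{\delta}{2}N)$ has length $\delta N<n$ for every $n\ge N/2$ (since $\delta<\tfrac12$), so it contains at most one multiple of each such $n$, which settles the bookkeeping cleanly.
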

\begin{proof}
First we observe that
\begin{align*}
\tfrac{N}{2} - \#\big\{n \in \Z \cap [\tfrac{N}{2}, N] : \|\tfrac{x}{n}\| < \delta\big\}
<\#\mathcal{S}_1(N, \delta, x)
< N - \#\big\{n \in \Z \cap [\tfrac{N}{2}, N] : \|\tfrac{x}{n}\| < \delta\big\}.
\end{align*}
Now,
\begin{align*}
\#\big\{n \in \Z \cap [\tfrac{N}{2}, N] : \|\tfrac{x}{n}\| < \delta\big\}
&= \#\big\{n \in \Z \cap [\tfrac{N}{2}, N] : \exists \ell \in \Z \;\; \ell - \delta < x / n < \ell + \delta\big\} \nonumber \\
&= \#\big\{n \in \Z \cap [\tfrac{N}{2}, N] : \exists \ell \in \Z \;\; x - \delta n < \ell n < x + \delta n\big\} \nonumber \\
&< \#\big\{n \in \Z \cap [\tfrac{N}{2}, N] : \exists \ell \in \Z \;\; x - \delta N < \ell n < x + \delta N \big\} \nonumber \\
&= \sum_{x - \delta N \,<\, m \,<\, x + \delta N} \sum_{\substack{n \,\mid\, m \\ N / 2 \,\leq\, n \,\leq\, N}} 1
\end{align*}
and the lower bound for $\#\mathcal{S}_1(N, \delta, x) $ follows.
Similarly one obtains the upper bound.
\end{proof}
We take $\delta=\frac{4\sqrt{\log x}}{\pi} N^{-\frac{1}2}$ and assume $x\in[N,e^{N/8}]$ so that
$0<\delta<\frac{1}{2}$ and $\delta N<2x$. In particular, by Lemmas~\ref{lem:varrhoNexp}
and~\ref{lem:divisoraverage} we obtain $|\varrho_N(x)|<1/x^2$ whenever the inequality
\begin{equation}\label{equ:Dineq}
D(x,N) < N/4
\end{equation}
is satisfied, where
\begin{equation*}
D(x,N):=\sum_{x -\frac{4}{\pi} \sqrt{N \log x} \,<\, m \,<\, x + \frac{4}{\pi} \sqrt{N \log x}}
        \sum_{\substack{n \,\mid\, m \\ N / 2 \,\leq\, n \,\leq\, N}} 1.
\end{equation*}
Now, we take $w \in(0,\frac{1}{2})$ and use Rankin's trick to bound the inner sum:
\begin{align}
D(x,N)
&<   \frac{9}{\pi} \sqrt{N \log x} \cdot \max_{m\leq 2x}\sum_{\substack{n \,\mid\, m \\ N / 2 \,\leq\, n \,\leq\, N}} 1
\leq \frac{9}{\pi} \sqrt{N \log x} \cdot \max_{m\leq 2x}\sum_{\substack{n \,\mid\, m \\ N / 2 \,\leq\, n \,\leq\, N}} \left(\frac{N}{n}\right)^w\notag\\
&<   \frac{9}{\pi}  N^{\frac{1}{2}+w} \sqrt{\log x}\cdot \max_{m\leq 2x}\sigma_{-w}(m),                                             \label{equ:boundforD}
\end{align}
where, for any $s\in\R$, $\sigma_s(m)$ is defined as the sum of the $s$-th powers of the divisors of $m$.
In his lost notebook~\cite{MR1606180}, Ramanujan studied the large values of $\sigma_{-s}(n)$ for any
$s\in[0,1]$. We state his result in a slightly weaker form in the following Lemma.
\begin{lem}\label{lem:highlycomposite}
For each fixed $\varepsilon > 0$ there exists $C_1 > 0$ such that
\begin{equation*}
\sigma_{-s}(m) < \exp\!\left(C_1\frac{(\log m)^{1-s}}{\log \log m}\right) ,
\end{equation*}
for all integers $m \geq 3$ and for all $s \in [\varepsilon, 1 - \varepsilon]$.
\end{lem}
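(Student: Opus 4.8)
The plan is to take logarithms and reduce the estimate on $\sigma_{-s}(m)$ to a bound on the sum $\sum_{p\mid m}p^{-s}$ over the prime divisors of $m$, and then to bound that sum via Chebyshev's estimates together with the observation that the sum is extremal when the prime divisors of $m$ form an initial segment of the primes.

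First I would write $\sigma_{-s}(m)=\prod_{p^a\|m}\frac{1-p^{-(a+1)s}}{1-p^{-s}}\le\prod_{p\mid m}\frac{1}{1-p^{-s}}$, so that $\log\sigma_{-s}(m)\le\sum_{p\mid m}\log\frac{1}{1-p^{-s}}$. Since $s\ge\varepsilon$, each prime $p\le 2^{1/\varepsilon}$ contributes at most $\log\frac{1}{1-2^{-\varepsilon}}$, and there are only $O_\varepsilon(1)$ such primes; while for $p>2^{1/\varepsilon}$ one has $p^{-s}<1/2$ and hence $\log\frac{1}{1-p^{-s}}\le 2p^{-s}$. This gives
\[
\log\sigma_{-s}(m)\le O_\varepsilon(1)+2\sum_{p\mid m}p^{-s}.
\]

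Next, since the product of the distinct primes dividing $m$ is at most $m$ and $t\mapsto t^{-s}$ is decreasing, if $m$ has $r$ distinct prime factors then $\sum_{p\mid m}p^{-s}\le\sum_{i\le r}p_i^{-s}$, where $p_i$ is the $i$-th prime, and moreover $\vartheta(p_r)=\sum_{i\le r}\log p_i\le\log m$; Chebyshev's lower bound $\vartheta(x)\gg x$ then forces $p_r\ll\log m$. Hence $\sum_{p\mid m}p^{-s}\le\sum_{p\le c\log m}p^{-s}$ for an absolute constant $c$. A partial summation against $\pi(t)\ll t/\log t$ (splitting the resulting integral at $\sqrt z$) yields $\sum_{p\le z}p^{-s}\ll_\varepsilon z^{1-s}/\log z$ uniformly for $s\in[\varepsilon,1-\varepsilon]$ and $z\ge3$, the hypothesis $1-s\ge\varepsilon$ being used to absorb the factor $1/(1-s)$. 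Taking $z=c\log m$ gives $\sum_{p\mid m}p^{-s}\ll_\varepsilon(\log m)^{1-s}/\log\log m$ once $m$ is large enough.

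Finally I would assemble the pieces: for $m$ larger than some $m_0(\varepsilon)$ this produces $\log\sigma_{-s}(m)\ll_\varepsilon(\log m)^{1-s}/\log\log m+O_\varepsilon(1)$, and the additive constant is absorbed because $(\log m)^{1-s}/\log\log m$ is bounded below by a positive constant uniformly for $m\ge3$ and $s\in[\varepsilon,1-\varepsilon]$ (the numerator is $\ge1$ since $\log m>1$, and the ratio tends to infinity with $m$). For the finitely many $m$ with $3\le m\le m_0(\varepsilon)$ one uses the trivial bound $\sigma_{-s}(m)\le d(m)=O_\varepsilon(1)$ and enlarges $C_1$. Exponentiating gives the claim. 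The main obstacle is not any single hard step but maintaining uniformity in $s$ throughout — this is where $s\in[\varepsilon,1-\varepsilon]$ enters, both to control the small primes and to turn $1/(1-s)$ into $O_\varepsilon(1)$ — and, relatedly, extracting the sharp exponent $1-s$ rather than a weaker power, which hinges on the extremal configuration being an initial segment of primes together with the sharp size $\sum_{p\le z}p^{-s}\asymp z^{1-s}/\log z$.
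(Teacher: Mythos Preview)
Your argument is correct. The multiplicative bound $\sigma_{-s}(m)\le\prod_{p\mid m}(1-p^{-s})^{-1}$, the reduction to $\sum_{p\mid m}p^{-s}$, the extremal-configuration step forcing the prime divisors to be an initial segment with $p_r\ll\log m$ via Chebyshev, and the partial-summation estimate $\sum_{p\le z}p^{-s}\ll_\varepsilon z^{1-s}/\log z$ uniformly on $[\varepsilon,1-\varepsilon]$ all go through as you describe; the absorption of the additive $O_\varepsilon(1)$ is also fine since $(\log m)^{1-s}/\log\log m\ge (\log m)^{\varepsilon}/\log\log m$ is bounded below by a positive constant depending only on $\varepsilon$ on $m\ge 3$.

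The paper, by contrast, does not prove this lemma at all: it simply quotes Ramanujan's study of highly composite numbers (equations (380)--(382) of the annotated lost notebook) together with a handbook reference. Ramanujan's analysis is sharper --- it identifies the precise extremal $m$ and yields an asymptotic rather than just an upper bound --- but it relies on substantially more machinery. Your route is genuinely more elementary and entirely self-contained, using nothing beyond Chebyshev's estimate and partial summation, at the cost of a non-explicit constant $C_1$; for the application in the paper (where only the shape $\exp(C_1(\log m)^{1-s}/\log\log m)$ matters, not the value of $C_1$) this is exactly what is needed.
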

\begin{proof}
This is a consequence of \cite[(380)--(382)]{MR1606180} (see the remark before (383) on how to make the
inequalities unconditional).
See also \cite[Ch.~3, \S3, 1b]{MR2186914}.
\end{proof}
Applying the bound given in this lemma in~\eqref{equ:boundforD}, we obtain
\begin{align*}
D(x,N)
&< \frac{9}{\pi} N^{\frac{1}{2}+w} \sqrt{\log x}\cdot \exp\!\left(C_1\frac{(\log 2x)^{1-w}}{\log \log 2x}\right)
\end{align*}
for some $C_1>0$ and any $\frac{1}{4}<w<\frac{1}{2}$, $N\in\N$ and $x\in[N,e^{N/8}]$. Picking $w =
\frac{1}{2} - 1/\log\log 2x$, so that $\frac{1}{4}<w<\frac{1}{2}$ for sufficiently large $N$, this
inequality becomes
\begin{align*}
D(x,N)< \frac{9}{\pi} N\sqrt{\log x}\cdot \exp\!\left(- \frac{\log N-C_1 e\,(\log 2x)^{\frac{1}{2}}}{\log \log 2x}\right).
\end{align*}
If $x<\exp\left(C'(\log N)^2\right)$, with $C':=(2 C_1 e)^{-2}$, then this is $o(N)$ and
so~\eqref{equ:Dineq} holds for $N$ large enough. In particular, we obtain $|\varrho_N(x)|<1/x^2$ for
$x\in[N,\exp(C'(\log N)^2)]$, and the proof of Proposition~\ref{pro:secbound} is completed.

\subsection{An arithmetic construction}\label{sec:arithmetic}
Here we complete the proof of Lemma~\ref{lem:rhoN}. More specifically, we show the following proposition.
\begin{pro}\label{pro:mainbound}
For every positive $C<1/\log 4$ there exists a positive constant $E$ depending on $C$, such that
\begin{equation*}
|\varrho_N(x)| \leq \exp\!\left(- \frac{\pi^2}{400^2} \exp\!\left(E\sqrt{\log x}\right)\right)
\end{equation*}
for all $x \in [N, \exp(C(\log N)^2)]$, for all sufficiently large $N$.
\end{pro}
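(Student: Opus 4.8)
By Lemma~\ref{lem:varrhoNexp} it suffices to produce, for a suitable $\delta$, a lower bound of the form $\#\mathcal{S}_1(N,\delta,x)\gg \delta^{-2}\exp(E\sqrt{\log x})$ valid for $x\in[N,\exp(C(\log N)^2)]$; the constant $\pi^2/400^2$ then materializes from the specific $\delta$ chosen. The plan is to choose $\delta$ of the size $1/(\text{a small power of }x)$ — more precisely, thinking of $n$ ranging over $[N/2,N]$, one wants $\|x/n\|\geq\delta$ to fail only for $n$ that divide some integer very close to $x$, and to count those $n$. So, as in Lemma~\ref{lem:divisoraverage}, the complement $\{n\in[N/2,N]:\|x/n\|<\delta\}$ is controlled by $\sum_{|m-x|<\delta N}\sum_{n\mid m,\ n\in[N/2,N]}1$, and the whole game is to show this divisor sum is at most $N/4$, say, so that $\#\mathcal{S}_1(N,\delta,x)\geq N/4$, and simultaneously $\delta$ is not too small, namely $\delta\gg N^{-1/2}\exp(\tfrac12 E\sqrt{\log x})$ so that $\delta^2\cdot\tfrac N4\gg \exp(E\sqrt{\log x})$.

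The key difference from the Rankin-trick argument of Section~\ref{sec:shortaverage} is that here I would replace the crude bound on the maximal number of divisors of a single $m\leq 2x$ in a dyadic block by an \emph{arithmetic construction}: one chooses a highly divisible modulus $q=\prod_{p\leq y}p$ (or a similar smooth number) with $\log q\asymp y$, and observes that the integers $m$ in the window $(x-\delta N,x+\delta N)$ whose number of divisors in $[N/2,N]$ is large must have a rigid multiplicative structure forced by the fact that $N\asymp q$-ish scales interact with the divisors of $m$. The point is to pick the parameters — the window length $\delta N$, the smoothness bound $y$, and hence the size of $N$ relative to $x$ via $\log N\gg(\log x)^{1/2}$ — so that the number of such $m$ times their common divisor count stays below $N/4$. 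Optimizing the trade-off between ``$\delta$ large'' (good for the final bound) and ``divisor sum small'' (needs $\delta$ small) is exactly where the threshold $C<1/\log4$ enters: one needs $\log N$ to exceed roughly $\sqrt{\log x}\log 4$ for the construction to have enough room, which translates to $x<\exp(C(\log N)^2)$ with $C$ approaching $1/\log4$ as the construction is pushed.

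Concretely the steps are: (i) fix $E<$ some explicit function of $C$ (determined at the end), set $\delta:=400^{-1}N^{-1/2}\exp(\tfrac12 E\sqrt{\log x})\cdot(\text{small factor})$, and check $0<\delta<\tfrac12$ and $\delta N<x$ in the stated range; (ii) invoke Lemma~\ref{lem:divisoraverage} to reduce to bounding $D(x,N):=\sum_{|m-x|<\delta N}\sum_{n\mid m,\ n\in[N/2,N]}1$ by $N/4$; (iii) carry out the arithmetic construction to bound, for each $m$ in the window, the number of its divisors lying in $[N/2,N]$ — the main obstacle, since one must go beyond the generic ``$m$ has few divisors in a dyadic interval'' heuristic and genuinely exploit that $m$ is pinned to a short interval around $x$ while $N$ is essentially $\sqrt{\log x}$-small on the log scale, so that the number of $(n,m)$ pairs is governed by counting $n\in[N/2,N]$ together with the unique (or few) multiples of $n$ in the window, forcing $n$ to divide a specific small set of integers; (iv) sum over the window and verify $D(x,N)<N/4$, hence $\#\mathcal{S}_1(N,\delta,x)>N/4$; (v) feed this into Lemma~\ref{lem:varrhoNexp}: $|\varrho_N(x)|\leq\exp(-\tfrac{\pi^2\delta^2}{2}\cdot\tfrac N4)\leq\exp(-\tfrac{\pi^2}{400^2}\exp(E\sqrt{\log x}))$, as claimed. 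The hard part is step (iii); everything else is bookkeeping of the parameter choices and the passage $\log N\gg(\log x)^{1/2}$ that the hypothesis $x\leq\exp(C(\log N)^2)$ provides.
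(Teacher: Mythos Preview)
Your proposal has a genuine gap, and in fact the route you sketch cannot reach the constant $1/\log 4$.

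The entire argument hinges on your step~(iii), which you describe only vaguely (a smooth modulus $q=\prod_{p\leq y}p$, rigid multiplicative structure, etc.) and never actually carry out. But the issue is not merely that step~(iii) is missing: the divisor-sum framework you have set up --- reducing via Lemma~\ref{lem:divisoraverage} to bounding $D(x,N)=\sum_{|m-x|<\delta N}\sum_{n\mid m,\ N/2\leq n\leq N}1$ --- is precisely the approach of Section~\ref{sec:shortaverage}, and the paper remarks there that even when optimized this yields only $C'=(4e)^{-2}+o(1)$, far below $1/\log 4$. The obstruction is that any bound on the number of divisors of a single $m$ in a dyadic block $[N/2,N]$ is ultimately controlled by Ramanujan-type estimates for $\sigma_{-w}(m)$, and these are tight; averaging over the short window $|m-x|<\delta N$ does not help because the window is too short to exploit cancellation or equidistribution. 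Your proposed smooth modulus $q$ does not interact with this: the integers $m$ near $x$ are not constrained modulo $q$ in any useful way.

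The paper's actual ``arithmetic construction'' is entirely different and does not pass through Lemma~\ref{lem:divisoraverage} or divisor sums at all. The idea is to introduce an auxiliary parameter $k$ and study $\mathcal{S}_k(N,\delta,x)=\{n\leq N:\|x/n^k\|\geq\delta\}$. One first shows directly (Lemma~\ref{lem:SkNdxbound}) that $\#\mathcal{S}_k$ is large provided $x\leq N^k$, roughly because the map $n\mapsto x/n^k$ spreads points out. Then the key trick (Lemma~\ref{lem:S1Ndxbound}) is the finite-difference identity $\sum_{j=0}^{k-1}(-1)^j\binom{k-1}{j}\frac{1}{n+j}=\frac{(k-1)!}{n(n+1)\cdots(n+k-1)}\approx\frac{(k-1)!}{n^k}$: if $\|x/(n+j)\|<\delta$ for all $j=0,\dots,k-1$, then $\|x(k-1)!/n^k\|<2^{k-1}\delta+\text{error}$, so $n\notin\mathcal{S}_k$. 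Contraposing, the largeness of $\#\mathcal{S}_k$ forces many $n$ into $\mathcal{S}_1$. Taking $k\approx\sqrt{C'\log x}$ and $\delta=2^{-k}/20$ gives $\#\mathcal{S}_1(N,\delta,x)\gg x^{1/k}$ and $\delta^2\,\#\mathcal{S}_1\gg 4^{-k}x^{1/k}=\exp\bigl((1/\sqrt{C'}-\sqrt{C'}\log 4)\sqrt{\log x}\bigr)$; this is where the threshold $C'<1/\log 4$ genuinely appears. Note that the resulting lower bound on $\#\mathcal{S}_1$ is $x^{1/k}/200$, not $N/4$, and the $\delta$ is of size $2^{-\sqrt{C'\log x}}$, not $N^{-1/2}$ --- both quite different from your parameter choices.
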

We start by giving a lower bound for $\#\mathcal{S}_k(N, \delta, x)$. We remind that $\mathcal{S}_k$ was
defined in~\eqref{equ:definSk}.
\begin{lem}\label{lem:SkNdxbound}
For all $a>0$, $\delta \in (0,1/2)$ and $x \in [e^{a k^2}, N^k]$, we have
\begin{equation*}
\#\mathcal{S}_k(N, \delta, x) \geq \big((1/2-\delta)(2^{-1/k}-(3/2)e^{-a}) -(2/3)^k\big)x^{1/k}
\end{equation*}
when $k$ is large enough (depending on $a$), and $N> e^{ak}$.
\end{lem}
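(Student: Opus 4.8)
The plan is to estimate $\#\mathcal{S}_k(N,\delta,x)$ from below by counting the integers $n\in\{1,\dots,N\}$ for which $\|x/n^k\|\geq\delta$, equivalently by bounding from above the size of the complementary set
\[
\mathcal{B}_k(N,\delta,x):=\{n\in\{1,\dots,N\}:\|x/n^k\|<\delta\}.
\]
The natural range to work in is $n\in[x^{1/k}/2^{1/k},x^{1/k}]$ (so $x/n^k\in[1,2]$): I would first discard the tail $n< x^{1/k}/2^{1/k}$, which contains at most $2^{-1/k}x^{1/k}$ integers — actually I want to keep those as ``good'' points rather than discard them, so let me reorganize. The idea is: among the integers $n\le N$ I will restrict attention to a dyadic-type window $J:=[x^{1/k}2^{-1/k},x^{1/k}]$, which by the hypothesis $x\le N^k$ is contained in $[1,N]$ and has roughly $(1-2^{-1/k})x^{1/k}$ integers. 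Inside $J$ the quantity $x/n^k$ ranges over $[1,2]$, and I will show that only few $n\in J$ can have $x/n^k$ within $\delta$ of an integer, i.e. within $\delta$ of $1$ or of $2$.

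For $n\in J$, writing $n=x^{1/k}(1-t)$ with $t\in[0,1-2^{-1/k}]$, one has $x/n^k=(1-t)^{-k}$, which is an increasing function of $t$; it equals $1$ at $t=0$ and equals $2$ at $t=1-2^{-1/k}$. So $\|x/n^k\|<\delta$ forces $t$ to lie in a neighbourhood of the left endpoint (where $(1-t)^{-k}<1+\delta$) or of the right endpoint (where $(1-t)^{-k}>2-\delta$). A short computation shows $(1-t)^{-k}<1+\delta$ is equivalent to $t<1-(1+\delta)^{-1/k}$, and since $(1+\delta)^{-1/k}\ge 1-\frac{\delta}{k}$ for the relevant range, this interval in $t$ has length $\le\delta/k$; translating back to $n$, the corresponding $n$ lie in an interval of length $\le(\delta/k)x^{1/k}$, hence number at most $1+(\delta/k)x^{1/k}$. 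The analysis near the right endpoint $x/n^k\approx 2$ is similar: there $(1-t)^{-k}>2-\delta$ gives $t>1-(2-\delta)^{-1/k}$, an interval of length comparable to $(\delta/k)2^{-1/k}x^{1/k}$, so again $\le 1+O((\delta/k)x^{1/k})$ values of $n$. This is where I expect the constants in the statement to come from — and this is the part that requires the hypothesis $x\ge e^{ak^2}$, which guarantees $x^{1/k}\ge e^{ak}$ is large enough that the ``$+1$''s are negligible and that $k$-th roots can be linearised with controlled error; the factors $(3/2)e^{-a}$ and $(2/3)^k$ in the claimed bound look like exactly the error terms produced by this linearisation and by the rounding to integer $n$.

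Putting it together: the number of $n\in J$ with $\|x/n^k\|<\delta$ is at most $O((\delta/k)x^{1/k})$ plus lower-order terms, so
\[
\#\mathcal{S}_k(N,\delta,x)\ \ge\ \#(J\cap\Z)\ -\ O\!\big((\delta/k)x^{1/k}\big)\ \ge\ \big((1-2^{-1/k})-(\text{small})\big)x^{1/k}.
\]
To match the stated form $\big((1/2-\delta)(2^{-1/k}-(3/2)e^{-a})-(2/3)^k\big)x^{1/k}$ I would not just use the single window $J=[x^{1/k}2^{-1/k},x^{1/k}]$ but rather the full available range $[x^{1/k}/2,x^{1/k}]\subseteq[1,N]$ — note $x\le N^k$ gives $x^{1/k}\le N$, and $x\ge e^{ak^2}$ with $N>e^{ak}$ handles the lower end — partitioned so that on a positive-proportion subset $x/n^k$ stays boundedly away from every integer. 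Concretely, for $n\in[x^{1/k}/2,x^{1/k}]$ one has $x/n^k\in[1,2^k]$; one checks that for $n$ in the sub-range where $x/n^k$ lies in $(1+\delta,\,2-\delta)\cup(2+\delta,\cdots)$ — i.e. away from the integers $1,2,\dots$ — the measure of ``bad'' $n$ is controlled by $\sum_{j\ge1}(\text{length of }\{n:|x/n^k-j|<\delta\})$, each term being $\lesssim \delta x^{1/k}j^{-1/k-1}$, summing to $\lesssim \delta x^{1/k}$. The factor $(1/2-\delta)$ reflects keeping only half the window minus the bad part, $2^{-1/k}$ comes from the dyadic scaling, $(3/2)e^{-a}$ absorbs the finitely many rounding errors (one per integer value crossed, and there are $O(x^{1/k})=O(e^{ak}\cdot\text{stuff})$... bounded using $x^{1/k}\ge e^{ak}$), and $(2/3)^k$ bounds the contribution of integers $j\ge 2$ via $\sum_{j\ge2}j^{-1-1/k}$ together with the geometric factor from the endpoint $n=x^{1/k}/2$. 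The main obstacle is precisely this bookkeeping: ensuring every one of the $O(x^{1/k})$ potential rounding losses and every integer-crossing is absorbed into the clean closed-form lower bound, which forces the hypothesis $x\ge e^{ak^2}$ (so that $x^{1/k}$ dominates any fixed power of $k$) and ``$k$ large enough depending on $a$'' (so that $2^{-1/k}-(3/2)e^{-a}>0$, which needs $e^{-a}<\tfrac23\cdot 2^{-1/k}$, eventually satisfied once $a$ is fixed and $k\to\infty$).
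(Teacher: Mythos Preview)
Your core idea—for each integer $\ell$, collect the $n$'s with $x/n^k$ lying in the ``safe'' zone $[\ell+\delta,\ell+1-\delta]$—is exactly the paper's. But the proposal never becomes a proof: you oscillate between two setups, never actually derive the constants, and your guesses about where they come from are wrong. In particular, $(2/3)^k$ does \emph{not} bound $\sum_{j\ge2}j^{-1-1/k}$ (that sum exceeds $1/4$ for every $k$), and $(3/2)e^{-a}$ is not a rounding-error term. You have the roles of the two error constants reversed, and your fixed $n$-range $[x^{1/k}/2,\,x^{1/k}]$ corresponds to a specific cutoff that cannot produce the stated $(3/2)e^{-a}$.

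What the paper does is this. Introduce a free parameter $b>1$. For each integer $\ell\in[1,b^k-\tfrac12]$, the integers $n$ with $(x/(\ell+\tfrac12))^{1/k}<n\le (x/(\ell+\delta))^{1/k}$ satisfy $x/n^k\in[\ell+\delta,\ell+\tfrac12)$ and hence lie in $\mathcal{S}_k(N,\delta,x)$; only the half-interval $[\ell+\delta,\ell+\tfrac12)$ is used, which is where the factor $\tfrac12-\delta$ comes from. Summing over $\ell$ and losing at most $1$ per $\ell$ to integer rounding gives
\[
\#\mathcal{S}_k(N,\delta,x)\ \ge\ \sum_{1\le\ell\le b^k-1/2}\Big[\Big(\frac{x}{\ell+\delta}\Big)^{1/k}-\Big(\frac{x}{\ell+1/2}\Big)^{1/k}\Big]\ -\ b^k.
\]
The bracket is bounded below via the mean-value estimate $(1+s)^{-1/k}-(1+t)^{-1/k}\ge(t-s)/\bigl(k(1+t)^{1+1/k}\bigr)$, and the resulting sum $\sum_\ell(\ell+\tfrac12)^{-1-1/k}$ is bounded below by an integral to give $k\bigl(2^{-1/k}-b^{-1}\bigr)$. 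Now set $b:=2e^a/3$. Then $b^{-1}=(3/2)e^{-a}$ is the loss from truncating the $\ell$-sum at $\ell\approx b^k$, while the accumulated rounding loss satisfies $b^k/x^{1/k}\le (2e^a/3)^k/e^{ak}=(2/3)^k$ (this is precisely where the hypothesis $x\ge e^{ak^2}$ is used). So the two mysterious constants are exactly opposite to what you guessed: $(3/2)e^{-a}$ is the tail of the $\ell$-sum, and $(2/3)^k$ is the rounding.
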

\begin{proof}
Let $b>1$ be a parameter that will be chosen later. If $\ell$ and $n$ are integers such that
\begin{equation*}
1 \leq \ell \leq b^k-\tfrac{1}{2}
\quad\text{ and }\quad
\Big(\frac{x}{\ell + 1/2}\Big)^{1/k} < n \leq \Big(\frac{x}{\ell + \delta}\Big)^{1/k},
\end{equation*}
then it follows easily that $n \in \mathcal{S}_k(N, \delta, x)$. As a consequence,
\begin{equation}\label{eq:A1}
\#\mathcal{S}_k(N,\delta, x)
\geq \sum_{1\leq\ell\leq b^k-\tfrac{1}{2}} \Big(\frac{x}{\ell + \delta}\Big)^{1/k} - b^k.
\end{equation}
For $0 \leq s < t \leq 1$, we have the lower bounds
\begin{equation*}
\frac{1}{(1 + s)^{1/k}} - \frac{1}{(1 + t)^{1/k}}
= \frac{1}{k}\int_s^t \frac{\,\mathrm{d} y}{(1+y)^{1+1/k}}
\geq \frac{t - s}{k(1+t)^{1+1/k}}.
\end{equation*}
Applying these inequalities in~\eqref{eq:A1} with $s = \delta / \ell$ and $t = 1/(2\ell)$, we get
\begin{equation*}
\#\mathcal{S}_k(N, \delta, x)
\geq \Big(\frac{1}{2}-\delta\Big)\frac{1}{k}
      \sum_{1\leq\ell\leq b^k-\tfrac{1}{2}} \frac{x^{1/k}}{(\ell+1/2)^{1+1/k}}
    - b^k.
\end{equation*}
Since
\begin{equation*}
\sum_{1\leq\ell\leq b^k-\tfrac{1}{2}} \frac{1}{(\ell+\frac{1}{2})^{1 + 1/k}}
\geq \int_1^{b^k-\frac{1}{2}}\frac{\,\mathrm{d} y}{(y+1/2)^{1+1/k}}
= k((3/2)^{-1/k}-b^{-1})
\geq k(2^{-1/k}-b^{-1}),
\end{equation*}
this bound show that
\begin{align*}
\#\mathcal{S}_k(N, \delta, x)
\geq (1/2-\delta)\big(2^{-1/k} - b^{-1}\big)x^{1/k} - b^k.
\end{align*}
From the assumption $x \geq e^{a k^2}$ we get the claim setting $b:=2e^a/3$.
\end{proof}
Now we state a well-known identity~(see, e.g., \cite[Ch.~1, Problem~5]{MR554488}).
\begin{lem}\label{lem:identity}
For all integers $m \geq 0$, the identity
\begin{equation*}
\sum_{j \,=\, 0}^m (-1)^j \binom{m}{j} \frac{1}{x + j}
= \frac{m!}{x(x+1) \cdots (x + m)}
\end{equation*}
holds in $\mathbb{Q}(x)$.
\end{lem}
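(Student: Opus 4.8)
The final statement to prove is Lemma~\ref{lem:identity}, the partial fractions identity $\sum_{j=0}^m (-1)^j \binom{m}{j}\frac{1}{x+j} = \frac{m!}{x(x+1)\cdots(x+m)}$.

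The plan is to prove the identity by partial fraction decomposition of the right-hand side. Regard $R(x) := \frac{m!}{x(x+1)\cdots(x+m)}$ as an element of $\mathbb{Q}(x)$: its denominator has the $m+1$ distinct simple roots $x = 0, -1, \ldots, -m$, and its numerator is a nonzero constant, so the denominator has strictly larger degree than the numerator. Hence $R(x)$ has a partial fraction expansion $\sum_{j=0}^m \frac{c_j}{x+j}$ with $c_j \in \mathbb{Q}$, and by uniqueness of this expansion over a field it suffices to check that $c_j = (-1)^j \binom{m}{j}$ for each $j \in \{0, \ldots, m\}$.

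To compute $c_j$ I would multiply by $(x+j)$ and evaluate at $x = -j$ (equivalently, take the residue of $R$ at the pole $x = -j$), obtaining
\[
c_j = \lim_{x \to -j}\, (x+j)\,R(x) = \frac{m!}{\prod_{0 \,\leq\, i \,\leq\, m,\ i \,\neq\, j}(i-j)}.
\]
Splitting the product into the factors with $i < j$ and those with $i > j$, one has $\prod_{i=0}^{j-1}(i-j) = (-1)^j j!$ and $\prod_{i=j+1}^{m}(i-j) = (m-j)!$, whence $c_j = \frac{m!}{(-1)^j\, j!\,(m-j)!} = (-1)^j \binom{m}{j}$, as desired. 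Since everything takes place in $\mathbb{Q}(x)$, there is no analytic subtlety: the claimed equality is exactly the statement that these two rational functions have the same partial fraction decomposition.

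Alternatively, and perhaps more self-containedly, one can induct on $m$ via the finite difference operator $\Delta$ given by $(\Delta g)(x) := g(x) - g(x+1)$. Iterating gives $(\Delta^m g)(x) = \sum_{j=0}^m (-1)^j \binom{m}{j}\, g(x+j)$, so the left-hand side is precisely $\Delta^m[1/x]$; the case $m=0$ is trivial, and the inductive step amounts to the elementary computation
\[
\frac{m!}{x(x+1)\cdots(x+m)} - \frac{m!}{(x+1)(x+2)\cdots(x+m+1)} = \frac{m!\,\big((x+m+1)-x\big)}{x(x+1)\cdots(x+m+1)} = \frac{(m+1)!}{x(x+1)\cdots(x+m+1)}.
\]
Neither approach presents a real obstacle; the only thing requiring mild care is the bookkeeping of index ranges in the residue product (or, in the inductive version, correctly aligning the two telescoping fractions), and both are routine.
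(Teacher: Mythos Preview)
Your proof is correct. The paper's own proof consists of the single sentence ``By induction on $m$,'' which is precisely your alternative approach via the finite difference operator; you have simply supplied the details (base case and the telescoping inductive step) that the paper omits. Your primary argument via the partial fraction decomposition and residue computation is a genuinely different route---it trades the recursive structure for a direct computation of each coefficient---and has the minor advantage of being non-inductive and making the formula for the individual summand transparent, at the cost of invoking (or reproving) the existence and uniqueness of partial fractions over a field. Both arguments are entirely standard and equally acceptable for a result of this type.
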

\begin{proof}
By induction on $m$.
\end{proof}
The next lemma is a simple inequality which will be useful later.
\begin{lem}\label{lem:ntokbound}
We have
\begin{equation*}
0 \leq \frac{1}{n^k} - \frac{1}{n(n + 1)\cdots(n + k - 1)}
< \frac{k^2}{2n^{k+1}},
\end{equation*}
for all positive integers $n$ and $k$.
\end{lem}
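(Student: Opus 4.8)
The plan is to prove Lemma~\ref{lem:ntokbound} by elementary manipulation. First I would factor out $\tfrac{1}{n^k}$ from the difference, writing
\begin{equation*}
\frac{1}{n^k} - \frac{1}{n(n+1)\cdots(n+k-1)} = \frac{1}{n^k}\left(1 - \prod_{j\,=\,1}^{k-1} \frac{n}{n+j}\right) = \frac{1}{n^k}\left(1 - \prod_{j\,=\,1}^{k-1}\Bigl(1+\frac{j}{n}\Bigr)^{-1}\right).
\end{equation*}
Since each factor $(1+j/n)^{-1}$ lies in $(0,1]$, the product is in $(0,1]$, which immediately gives the lower bound $\geq 0$ (with equality only for $k=1$). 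The main content is the upper bound on this difference.

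For the upper bound I would use the elementary inequality $\prod_i (1-a_i) \geq 1 - \sum_i a_i$ valid for $a_i \in [0,1]$ (a trivial induction), applied to $a_j := \tfrac{j}{n+j} \in [0,1)$, since $(1+j/n)^{-1} = 1 - \tfrac{j}{n+j}$. This yields
\begin{equation*}
\prod_{j\,=\,1}^{k-1}\Bigl(1+\frac{j}{n}\Bigr)^{-1} \geq 1 - \sum_{j\,=\,1}^{k-1}\frac{j}{n+j} \geq 1 - \frac{1}{n}\sum_{j\,=\,1}^{k-1} j = 1 - \frac{k(k-1)}{2n},
\end{equation*}
so that the parenthesised quantity $1 - \prod(\cdots)$ is at most $\tfrac{k(k-1)}{2n} < \tfrac{k^2}{2n}$. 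Multiplying by $\tfrac{1}{n^k}$ gives the claimed bound $< \tfrac{k^2}{2n^{k+1}}$.

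There is no serious obstacle here; the only point requiring a little care is choosing the right linearisation so that the error term comes out with the clean constant $k^2/2$ rather than something weaker. Bounding $\tfrac{j}{n+j}$ by $\tfrac{j}{n}$ (rather than keeping the denominator $n+j$) is exactly what produces the sum $\sum_{j<k} j = \binom{k}{2}$ and hence the factor $k^2/2$, and the strict inequality is automatic since $k(k-1) < k^2$. One could alternatively argue via the logarithm, using $-\log\prod(1+j/n)^{-1} = \sum_j \log(1+j/n) \leq \sum_j j/n$ together with $1 - e^{-t} \leq t$, but the multiplicative inequality $\prod(1-a_i)\geq 1-\sum a_i$ keeps everything finite and avoids any appeal to transcendental estimates, so I would present it that way.
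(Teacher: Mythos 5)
Your argument is correct and follows the same overall strategy as the paper: factor out $n^{-k}$ and bound $1 - \prod_{j=0}^{k-1}(1+j/n)^{-1}$ above by $k^2/(2n)$. The only difference is the one-line elementary estimate used to linearise the product: you invoke the Weierstrass inequality $\prod_j(1-a_j) \geq 1 - \sum_j a_j$ with $a_j = j/(n+j)$, whereas the paper uses $(1+j/n)^{-1} \geq e^{-j/n}$ followed by $1 - e^{-t} < t$; as you yourself note, the two are interchangeable here, and yours has the mild advantage of staying entirely within rational arithmetic.
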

\begin{proof}
Since $1 + x \leq e^x$ for all real number $x$, we have
\begin{equation*}
0 \leq 1 - \prod_{j\,=\,0}^{k - 1} \left(1 + \frac{j}{n}\right)^{-1}
\leq 1 - \exp\!\left(-\sum_{j \,=\, 0}^{k - 1} \frac{j}{n}\right)
< 1 - e^{-k^2/(2n)}
< \frac{k^2}{2n},
\end{equation*}
and dividing everything by $n^k$ we get the desired claim.
\end{proof}
Next, using Lemmas~\ref{lem:identity} and~\ref{lem:ntokbound}, we deduce a bound for $\#\mathcal{S}_1$
from the bound for $\#\mathcal{S}_k$ given by Lemma~\ref{lem:SkNdxbound}.

Next, we use the previous lemmas to deduce a bound for $\mathcal{S}_1$ from Lemma~\ref{lem:SkNdxbound}.
\begin{lem}\label{lem:S1Ndxbound}
For all $\delta \in (0, 2^{-k}/20]$ and $x \in [4^{k^2}, N^k/(k-1)!]$, we have
\begin{equation*}
\#\mathcal{S}_1(N, \delta, x) \geq \frac{x^{1/k}}{200}
\end{equation*}
when $k$ is large enough and $N \geq k4^{k}$.
\end{lem}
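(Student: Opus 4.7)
The plan is to transfer the lower bound on $\#\mathcal{S}_k$ furnished by Lemma~\ref{lem:SkNdxbound} into a lower bound on $\#\mathcal{S}_1$, using the partial-fraction identity of Lemma~\ref{lem:identity} together with the approximation of Lemma~\ref{lem:ntokbound}. The key observation is that if $y := (k-1)!\,x$ and $\|y/n^k\|$ is bounded away from $0$, then since
\[
\frac{y}{n(n+1)\cdots(n+k-1)} = \sum_{j=0}^{k-1}(-1)^j \binom{k-1}{j}\frac{x}{n+j}
\]
is a good approximation to $y/n^k$, the triangle inequality for $\|\cdot\|$ will force at least one of the shifts $\|x/(n+j)\|$, $j \in \{0,\ldots,k-1\}$, to remain appreciable.

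First I would set $y := (k-1)!\,x$ and invoke Lemma~\ref{lem:SkNdxbound} with $a := \log 4$ (so $b := 2e^{a}/3 = 8/3$), $\Delta := 1/4$, and with the role of $N$ in that lemma played by the present $N$. The hypotheses $y \in [e^{a k^2}, N^k]$ and $N > e^{ak}$ translate to $x \in [4^{k^2}/(k-1)!, N^k/(k-1)!]$ and $N > 4^k$, both implied by our assumptions $x \geq 4^{k^2}$ and $N \geq k\,4^{k}$. The explicit construction inside the proof of Lemma~\ref{lem:SkNdxbound} yields a subset $\mathcal{T} \subseteq \mathcal{S}_k(N, 1/4, y) \cap (y^{1/k}/b,\, y^{1/k}]$ with $\#\mathcal{T} \geq (5/32 + o(1))\,y^{1/k}$ as $k \to \infty$, since $(1/2-1/4)(2^{-1/k}-3/8) - (2/3)^k \to 5/32$.

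Next, for each $n \in \mathcal{T}$, Lemma~\ref{lem:ntokbound} multiplied by $y$ yields
\[
\left|\frac{y}{n^k} - \frac{y}{n(n+1)\cdots(n+k-1)}\right| < \frac{y\,k^2}{2n^{k+1}} \leq \frac{k^2 b^{k+1}}{2\,y^{1/k}} = O\!\left(k^2 (2/3)^k\right) = o(1),
\]
using $n \geq y^{1/k}/b$ and $y^{1/k} \geq 4^k$. Hence $\|y/(n(n+1)\cdots(n+k-1))\| \geq 1/4 - o(1) \geq 1/5$ for $k$ large. Expanding via Lemma~\ref{lem:identity} (with $m = k-1$) and applying the triangle inequality for $\|\cdot\|$ give $\sum_{j=0}^{k-1}\binom{k-1}{j}\|x/(n+j)\| \geq 1/5$. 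If every $\|x/(n+j)\|$ were smaller than $\delta$, the left-hand side would be at most $2^{k-1}\delta \leq 2^{k-1} \cdot 2^{-k}/20 = 1/40 < 1/5$, a contradiction. So for each $n \in \mathcal{T}$ there is some $j(n) \in \{0,\ldots,k-1\}$ with $n + j(n) \in \mathcal{S}_1(N+k-1, \delta, x)$; discarding the at most $k-1$ elements of $\mathcal{T}$ with $n > N - k + 1$ forces $n + j(n) \leq N$.

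To conclude, the map $n \mapsto n + j(n)$ is at most $k$-to-one, so
\[
\#\mathcal{S}_1(N,\delta,x) \geq \frac{\#\mathcal{T} - (k-1)}{k} \geq \frac{(5/32 + o(1))\,y^{1/k}}{k} - 1.
\]
Stirling gives $((k-1)!)^{1/k} = (k/e)(1 + o(1))$, so $y^{1/k}/k = (x^{1/k}/e)(1 + o(1))$, and therefore $\#\mathcal{S}_1(N,\delta,x) \geq (5/(32 e) - o(1))\,x^{1/k} - 1 \geq x^{1/k}/200$ for $k$ large enough (the $-1$ is absorbed since $x^{1/k} \geq 4^k \to \infty$). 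The chief obstacle is the approximation step: the error in Lemma~\ref{lem:ntokbound} carries a factor of $y$, which is enormous, and is only tamed by the geometric confinement $n > y^{1/k}/b$ extracted from the construction inside the proof of Lemma~\ref{lem:SkNdxbound}; without this explicit source range the argument would break down. The rest is bookkeeping, and the constant $200$ is slack, the argument yielding $\sim 5/(32e) \approx 0.058$.
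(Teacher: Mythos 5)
Your proposal is correct and follows the same strategy as the paper: transfer the $\#\mathcal{S}_k$ lower bound of Lemma~\ref{lem:SkNdxbound} to $\#\mathcal{S}_1$ via the partial-fraction identity (Lemma~\ref{lem:identity}) and the approximation of $n(n+1)\cdots(n+k-1)$ by $n^k$ (Lemma~\ref{lem:ntokbound}), with the factor $2^{k-1}$ from the binomial coefficients absorbed by the choice $\delta\le 2^{-k}/20$. The one genuine point of difference is how you tame the error term $y k^2/(2n^{k+1})$ for small $n$: you observe that the constructive argument inside the proof of Lemma~\ref{lem:SkNdxbound} only produces $n$ in the window $(y^{1/k}/b,\,y^{1/k}]$, so no small $n$ ever enter the picture. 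The paper, by contrast, works with the statement of Lemma~\ref{lem:SkNdxbound} alone, going through the complement set $\mathcal{T}_k$ of ``bad'' $n$'s and explicitly subtracting the at most $\eta k (x/d)^{1/(k+1)}$ exceptional small $n$. Your route is cleaner and yields a slightly better constant ($5/(32e)\approx 0.058$ against $1/200$), but at the cost of relying on the internal construction of Lemma~\ref{lem:SkNdxbound} rather than on its black-box statement; if you wanted to keep things modular you would need to restate that lemma so that the source window $(y^{1/k}/b,\,y^{1/k}]$ appears in the conclusion, or else add back an exception term as the paper does. Everything else --- the application of the triangle inequality for $\|\cdot\|$ (legitimate since $\binom{k-1}{j}\in\Z$), the at-most-$k$-to-one counting, the discard of the $k-1$ boundary elements, and the Stirling step --- is sound.
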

The assumption $N \geq k4^{k}$ is an easy way to ensure that $4^{k^2}<N^k/(k-1)!$.
\begin{proof}
We set $\delta=d\cdot 2^{-(k+1)}$ for some $d$ that we fix later.
First, we have
\begin{align}\label{eq:A2}
\#\mathcal{S}_1(N, \delta, x)
&\geq \frac{1}{k} \cdot \#\big\{n \in \{1, \ldots, N\} \colon \exists j \in \{0, \ldots, k - 1\}\text{ with }\|x/(n + j)\| \geq \delta \big\} \\
&= \frac{1}{k} \cdot \big(N - \#\mathcal{T}_k(N, \delta, x)\big), \nonumber
\end{align}
where
\begin{equation*}
\mathcal{T}_k(N, \delta, x) := \big\{n \in \{1, \ldots, N\} \colon \|x/(n + j)\| < \delta\text{ for all } j \in \{0, \ldots, k - 1\}\big\}.
\end{equation*}
If $n \in \mathcal{T}_k(N, \delta, x)$, then for all $j \in \{0, \ldots, k - 1\}$ there exists an integer
$\ell_j$ such that
\begin{equation*}
\Big|\frac{x}{n + j} - \ell_j \Big| < \delta.
\end{equation*}
Therefore, setting
\begin{equation*}
\ell := \sum_{j=0}^{k-1} (-1)^j \binom{k-1}{j} \ell_j
\end{equation*}
and using Lemma~\ref{lem:identity}, we obtain
\begin{equation*}
\Big|\frac{x (k-1)!}{n(n+1)\cdots(n+k-1)} - \ell\Big|
\leq \sum_{j=0}^{k-1} \binom{k-1}{j} \Big|\frac{x}{n + j} - \ell_j\Big|
< 2^{k-1} \delta
= d/4.
\end{equation*}
Furthermore, assuming $n \geq \eta k (x/d)^{1/(k+1)}$ for some $\eta>0$, thanks to
Lemma~\ref{lem:SkNdxbound} we have that
\begin{align*}
\Big|\frac{x (k-1)!}{n^k} - \ell\Big|
&\leq \frac{d}{4} + \Big|\frac{x (k-1)!}{n^k} - \frac{x (k-1)!}{n(n+1)\cdots(n+k-1)}\Big|
\leq  \frac{d}{4} + \frac{x k^2(k-1)!}{2n^{k+1}}\\
&\leq \frac{d}{4} + \frac{x d k^2k^k}{2e^k\eta^{k+1}k^{k+1}x}
= \frac{d}{4}\Big(1 + \frac{2k}{\eta (\eta e)^k}\Big).
\end{align*}
Choosing $\eta> e^{-1}$ this quantity becomes $\leq 3d/10$ if $k$ is large enough (depending on the
choice of $\eta$). Choosing $d<5/3$ we ensure that this quantity is strictly smaller than $1/2$.
Therefore, under these hypotheses
\begin{equation*}
\Big\|\frac{x (k-1)!}{n^k}\Big\| \leq \frac{3d}{10}.
\end{equation*}
Summarizing, we have proved that for all $n \in \mathcal{T}_k(N, \delta, x)$, but at most $\eta
k(x/d)^{1/(k+1)}$ exceptions, it holds $n \notin \mathcal{S}_k(N, 3d/10, x(k-1)!)$. As a consequence,
\begin{equation*}
\#\mathcal{T}_k(N, \delta, x) - \eta k(x/d)^{1/(k+1)} \leq N - \#\mathcal{S}_k(N, 3d/10, x(k-1)!).
\end{equation*}
Hence, recalling~\eqref{eq:A2} and thanks to Lemma~\ref{lem:SkNdxbound}, since by hypothesis $x \geq
e^{ak^2}$ for some $a \geq 1$, we obtain
\begin{align*}
\#\mathcal{S}_1(N, \delta, x)
&\geq \frac{1}{k} \#\mathcal{S}_k(N, 3d/10, x(k-1)!) - \eta(x/d)^{1/(k+1)}\\
&\geq \Big(\Big(\frac{1}{2}-\frac{3d}{10}\Big)(2^{-1/k}-\frac{3}{2}e^{-a}) -(2/3)^k\Big)\frac{(k-1)!^{1/k}}{k} \,x^{1/k}
    - \eta(x/d)^{1/(k+1)}.
\end{align*}
Collecting $x^{1/k}$ and using the inequality $k!\geq (k/e)^k$ we get
\begin{align*}
\#\mathcal{S}_1(N, \delta, x)
&\geq x^{1/k} \Big(\Big(\Big(\frac{1}{2}-\frac{3d}{10}\Big)(2^{-1/k}-\frac{3}{2}e^{-a}) -(2/3)^k\Big)\frac{(1/k)^{1/k}}{e}
            - \frac{\eta}{d^{1/(k+1)}x^{1/(k^2 + k)}}\Big),
\end{align*}
and recalling the assumption $x\geq e^{ak^2}$, we obtain
\begin{align*}
\#\mathcal{S}_1(N, \delta, x)
&\geq x^{1/k} \Big(\Big(\Big(\frac{1}{2}-\frac{3d}{10}\Big)(2^{-1/k}-\frac{3}{2}e^{-a}) -(2/3)^k\Big)\frac{(1/k)^{1/k}}{e}
            - \frac{\eta}{d^{1/(k+1)}e^{ak/(k + 1)}}\Big).
\end{align*}
For $k$ large enough, this quantity is positive as soon as
\begin{equation*}
\Big(\frac{1}{2}-\frac{3d}{10}\Big)(1-\frac{3}{2}e^{-a}) > \frac{\eta e}{e^a}.
\end{equation*}
If $\eta$ is very close to $e^{-1}$ and $d$ is very small, this inequality is satisfied by any $a$ with
$(1-(3/2)e^{-a})>2e^{-a}$, i.e. $a > \log (7/2)$. We set $a=\log 4$, allowing the choice $\eta=0.4$
and $d=0.1$, when $k$ is large. An explicit computation shows that with these values for the parameters
the lower bound is larger than $x^{1/k}/200$ as soon as $k$ is larger than $400$.
\end{proof}
We are now ready to prove Proposition~\ref{pro:mainbound}. Let $C$ be any positive constant, $C< 1/\log
4$, and pick any $C'$ with $C<C'<1/\log 4$. We take $\delta := 2^{-k}/20$, and $k :=
\intpart{\sqrt{C'\log x}}$ for every $x$ in the given range. Then $x$ is in the interval $[4^{k^2},
N^k/(k-1)!]$. In fact, the inequality $4^{k^2} \leq x$ is evident, and
\begin{gather*}
x \leq \frac{N^k}{(k-1)!}
\Leftarrow
x \leq \Big(\frac{eN}{k}\Big)^k
\iff
\log x \leq k\log\Big(\frac{eN}{k}\Big).
\end{gather*}
Since $\sqrt{C'\log x} - 1 \leq k = \intpart{\sqrt{C'\log x}} \leq \sqrt{C'\log x}$, the last inequality
is implied by
\begin{equation*}
\frac{\log x}{\sqrt{C'\log x}-1} + \log(\sqrt{\log x}) \leq \log(eN/\sqrt{C'}).
\end{equation*}
As a function of $x$ this can be written as
\begin{equation*}
\sqrt{\frac{\log x}{C'}} + \log(\sqrt{\log x}) \leq \log N  + O_{C'}(1).
\end{equation*}
We are assuming that $\log N \leq \log x \leq C(\log N)^2$, hence this is implied by
\begin{equation*}
\sqrt{C/C'}\log N + \log\log N \leq \log N  + O_{C,C'}(1)
\end{equation*}
which is true as soon as $N$ is large enough.
This proves that we can apply Lemma~\ref{lem:S1Ndxbound}, getting
\begin{align*}
\delta^2 \cdot \#\mathcal{S}_1(N, \delta, x)
&\geq \frac{1/200}{400} \cdot 4^{-k} x^{1/k}
 =    \frac{2}{400^2} \exp\Big(\frac{\log x}{k} - k\log 4\Big) \\
&\geq \frac{2}{400^2} \exp(E\sqrt{\log x}),
\end{align*}
where $E:=\tfrac{1}{\sqrt{C'}} - \sqrt{C'}\log 4$.
Hence, applying Lemma~\ref{lem:varrhoNexp}, we get
\begin{align*}
|\varrho_N(x)|
&\leq \exp\!\Big(-\frac{\pi^2 \delta^2}{2} \cdot \#\mathcal{S}_1(N, \delta, x)\Big)
 \leq \exp\!\Big(-\frac{\pi^2}{400^2}\exp(E\sqrt{\log x})\Big),
\end{align*}
which is the claim.

\section*{Appendix}
The time needed for the computation of $\m_N$ with a direct exhaustive computation grows exponentially
with $N$ and becomes unpractical already for $N\approx 30$. Thus, for computing $\m_N$ for larger $N$ we
used the following idea. Let
\begin{equation*}
A:=\left\{\sum_{n \,=\, 1}^R \frac{s_n}{n} \colon s_1,\dots, s_R\in\{-1,+1\}\right\}
\quad
B:=\left\{\sum_{n \,=\, R+1}^N \frac{s_n}{n} \colon s_{R+1},\dots, s_N\in\{-1,+1\}\right\},
\end{equation*}
for any intermediate parameter $R\in[1, N]$. Then $\m_N = \min\{|a-b|\colon a\in A,\ b\in B\}$. The
algorithm producing this minimal distance is very fast if one preorders the lists $A$ and $B$. In this
way we were able to compute all $\m_N$ with $N\leq \np$; see the table below. The need of a large
quantity of RAM for storing the lists prevents us to compute significantly larger values of $N$. For the
computations we have used PARI/GP~\cite{PARI2}.

\begin{center}
\begin{tabular}{ rr| rr| rr| rr}
$N$&    $\m_N L_N$ & $N$&    $\m_N L_N$ & $N$&    $\m_N L_N$ & $N$&    $\m_N L_N$  \\
  \hline
 1 &             1 & 17 &            97 & 33 &        902339 & 49 &    421936433719\\
 2 &             1 & 18 &            97 & 34 &       7850449 & 50 &    175378178867\\
 3 &             1 & 19 &          3767 & 35 &       7850449 & 51 &      8643193037\\
 4 &             1 & 20 &          3767 & 36 &       7850449 & 52 &      8643193037\\
 5 &             7 & 21 &          3767 & 37 &      10683197 & 53 &    461784703049\\
 6 &             3 & 22 &          2285 & 38 &      68185267 & 54 &    461784703049\\
 7 &            11 & 23 &         24319 & 39 &      37728713 & 55 &    461784703049\\
 8 &            13 & 24 &         24319 & 40 &      37728713 & 56 &    461784703049\\
 9 &            11 & 25 &         71559 & 41 &     740674333 & 57 &    514553001783\\
10 &            11 & 26 &          4261 & 42 &     740674333 & 58 &    116096731427\\
11 &            23 & 27 &         13703 & 43 &    1774907231 & 59 &   2810673355099\\
12 &            23 & 28 &         13703 & 44 &    1774907231 & 60 &   2810673355099\\
13 &           607 & 29 &        872843 & 45 &    1774907231 & 61 &   4723651835663\\
14 &           251 & 30 &        872843 & 46 &    1699239271 & 62 & 136420009515743\\
15 &           251 & 31 &      17424097 & 47 &    3103390393 & 63 & 136420009515743\\
16 &           125 & 32 &      13828799 & 48 &    3103390393 & 64 &  23093515509397\\
\end{tabular}
\end{center}

\bibliographystyle{amsplain}

\end{document}